\date{}
\title{An $M^*$ Proxy for Sparse Recovery Performance}
\renewcommand{\algorithmicrequire}{\textbf{Input:}} 
\renewcommand{\algorithmicensure}{\textbf{Output:}}
\newcommand{\N}{\mathbb{N}}
\newcommand{\seqref}[1]{\textup{\tagform@split{\getrefnumber{#1}}}}
\newcommand\tagform@split[1]{%
  \begingroup
  \m@th\normalfont(\ignorespaces #1\unskip\@@italiccorr)%
  \endgroup
}
\pgfplotsset{compat=1.13}
\pgfplotsset{plotOptions/.style={%
		label style={font=\scriptsize},
		legend style={font=\scriptsize},
		tick label style={font=\scriptsize},
		solid,
		very thick
	}}
\definecolor{colorP1}{RGB}{55,126,184}  
\definecolor{colorP2}{RGB}{228,26,28}  
\definecolor{colorP3}{RGB}{152,78,163} 
\definecolor{colorP4}{RGB}{77,175,74}  
\definecolor{colorP5}{RGB}{250, 150, 10} 
\definecolor{colorP6}{cmyk}{0,0.5,1,0}
\author{Mathieu Barr\'e\footnote{
	INRIA - D\'epartement d'informatique de l'ENS, CNRS, 
	PSL University, Paris, France. Email: \emph{mathieu.barre@inria.fr}.},\ \ Alexandre d'Aspremont\footnote{D\'epartement d'informatique de l'ENS, CNRS, PSL University, Paris, France. Email: \emph{aspremon@ens.fr}.} \\[.3cm]}
\begin{document}
		
\definecolor{ddarkbrown}{rgb}{0.5,0.2,0.05} \definecolor{bbluegray}{rgb}{0.05,0,0.5}

\newtheorem{theorem}{Theorem}[section]
\newtheorem{proposition}[theorem]{Proposition}
\newtheorem{definition}[theorem]{Definition}
\newtheorem{lemma}[theorem]{Lemma}
\newtheorem{corollary}[theorem]{Corollary}
\newtheorem{remark}[theorem]{Remark}

\newcommand{\BEAS}{\begin{eqnarray*}}
\newcommand{\EEAS}{\end{eqnarray*}}
\newcommand{\BEA}{\begin{eqnarray}}
\newcommand{\EEA}{\end{eqnarray}}
\newcommand{\BEQ}{\begin{equation}}
\newcommand{\EEQ}{\end{equation}}
\newcommand{\BIT}{\begin{itemize}}
\newcommand{\EIT}{\end{itemize}}
\newcommand{\BNUM}{\begin{enumerate}}
\newcommand{\ENUM}{\end{enumerate}}

\newcommand{\BA}{\begin{array}}
\newcommand{\EA}{\end{array}}

\newcommand{\refp}[1]{(\ref{#1})}

\newcommand{\cf}{{\it cf.}}
\newcommand{\eg}{{\it e.g.}}
\newcommand{\ie}{{\it i.e.}}
\newcommand{\etc}{{\it etc.}}
\newcommand{\ones}{\mathbf 1}

\newcommand{\reals}{{\mathbb R}}
\newcommand{\sreals}{\scriptsize{\mbox{\bf R}}}
\newcommand{\integers}{{\mbox{\bf Z}}}
\newcommand{\eqbydef}{\mathrel{\stackrel{\Delta}{=}}}
\newcommand{\complex}{{\mbox{\bf C}}}
\newcommand{\symm}{{\mbox{\bf S}}}  

\newcommand{\Span}{\mbox{\textrm{span}}}
\newcommand{\Range}{\mbox{\textrm{range}}}
\newcommand{\nullspace}{{\mathcal N}}
\newcommand{\range}{{\mathcal R}}
\newcommand{\diam}{\mathop{\bf radius}}
\newcommand{\sphere}{{\mathbb S}}
\newcommand{\Nullspace}{\mbox{\textrm{nullspace}}}
\newcommand{\Rank}{\mathop{\bf Rank}}
\newcommand{\NumRank}{\mathop{\bf NumRank}}
\newcommand{\NumCard}{\mathop{\bf NumCard}}
\newcommand{\Card}{\mathop{\bf Card}}
\newcommand{\Tr}{\mathop{\bf Tr}}
\newcommand{\diag}{\mathop{\bf diag}}
\newcommand{\lambdamax}{{\lambda_{\rm max}}}
\newcommand{\lambdamin}{\lambda_{\rm min}}
\newcommand{\idm}{\mathbf{I}}

\newcommand{\Expect}{\textstyle{\bf E}}
\newcommand{\Median}{\textstyle\mathop{\bf M}}
\newcommand{\Prob}{\mathop{\bf Prob}}
\newcommand{\erf}{\mathop{\bf erf}}

\newcommand{\Co}{{\mathop {\bf Co}}}
\newcommand{\co}{{\mathop {\bf Co}}}
\newcommand{\Var}{\mathop{\bf var{}}}
\newcommand{\dist}{\mathop{\bf dist{}}}
\newcommand{\Ltwo}{{\bf L}_2}
\newcommand{\QED}{~~\rule[-1pt]{6pt}{6pt}}\def\qed{\QED}
\newcommand{\approxleq}{\mathrel{\smash{\makebox[0pt][l]{\raisebox{-3.4pt}{\small$\sim$}}}{\raisebox{1.1pt}{$<$}}}}
\newcommand{\argmin}{\mathop{\rm argmin}}
\newcommand{\epi}{\mathop{\bf epi}}
\newcommand{\var}{\mathop{\bf var}}

\newcommand{\vol}{\mathop{\bf vol}}
\newcommand{\Vol}{\mathop{\bf vol}}

\newcommand{\dom}{\mathop{\bf dom}}
\newcommand{\aff}{\mathop{\bf aff}}
\newcommand{\cl}{\mathop{\bf cl}}
\newcommand{\Angle}{\mathop{\bf angle}}
\newcommand{\intr}{\mathop{\bf int}}
\newcommand{\relint}{\mathop{\bf rel int}}
\newcommand{\bd}{\mathop{\bf bd}}
\newcommand{\vect}{\mathop{\bf vec}}
\newcommand{\dsp}{\displaystyle}
\newcommand{\foequal}{\simeq}
\newcommand{\VOL}{{\mbox{\bf vol}}}
\newcommand{\argmax}{\mathop{\rm argmax}}
\newcommand{\xopt}{x^{\rm opt}}

\newcommand{\Xb}{{\mbox{\bf X}}}
\newcommand{\xst}{x^\star}
\newcommand{\varphist}{\varphi^\star}
\newcommand{\lambdast}{\lambda^\star}
\newcommand{\Zst}{Z^\star}
\newcommand{\fstar}{f^\star}
\newcommand{\xstar}{x^\star}
\newcommand{\xc}{x^\star}
\newcommand{\lambdac}{\lambda^\star}
\newcommand{\lambdaopt}{\lambda^{\rm opt}}

\newcommand{\geqK}{\mathrel{\succeq_K}}
\newcommand{\gK}{\mathrel{\succ_K}}
\newcommand{\leqK}{\mathrel{\preceq_K}}
\newcommand{\lK}{\mathrel{\prec_K}}
\newcommand{\geqKst}{\mathrel{\succeq_{K^*}}}
\newcommand{\gKst}{\mathrel{\succ_{K^*}}}
\newcommand{\leqKst}{\mathrel{\preceq_{K^*}}}
\newcommand{\lKst}{\mathrel{\prec_{K^*}}}
\newcommand{\geqL}{\mathrel{\succeq_L}}
\newcommand{\gL}{\mathrel{\succ_L}}
\newcommand{\leqL}{\mathrel{\preceq_L}}
\newcommand{\lL}{\mathrel{\prec_L}}
\newcommand{\geqLst}{\mathrel{\succeq_{L^*}}}
\newcommand{\gLst}{\mathrel{\succ_{L^*}}}
\newcommand{\leqLst}{\mathrel{\preceq_{L^*}}}
\newcommand{\lLst}{\mathrel{\prec_{L^*}}}

\newcommand{\realsp}{\mathbf{R}_+^n}
\newcommand{\intrealsp}{\int_{\mathbf{R}_+^n}}

\maketitle

\begin{abstract}
This paper provides a new tractable lower bound for the sparse recovery threshold of sensing matrices. This lower bound is used as a proxy to quantify the quality of sensing matrices in two different applications. First, it serves as regularization for the classical dictionary learning problem in order to learn dictionaries with better generalisation properties on unseen data. Then, the proxy is used to design sampling schemes for MRI acquisition that exhibit high reconstruction performances.
\end{abstract}

\section{Introduction}\label{s:intro}

\paragraph{Compressed Sensing} In the classical compressed sensing setting, we let $A\in\reals^{m \times n}$ be a full rank matrix, we are given $m$ observations $Ax_0$ of a signal $x_0\in\reals^n$, and we seek to decode it by solving
\BEQ\label{eq:l0-dec}
\BA{ll}
\mbox{minimize} & \Card(x)\\
\mbox{subject to} & Ax=Ax_0,
\EA
\EEQ
in the variable $x\in\reals^n$, with $\Card(x)$ being the number of nonzero coefficients of $x$. Problem~\eqref{eq:l0-dec} is combinatorially hard, but under certain conditions on the matrix $A$ (see e.g. \citep{Cand05,Dono05,Kash07,Cohe06}), we can reconstruct the signal by solving instead
\BEQ\label{eq:l1-dec}
\BA{ll}
\mbox{minimize} & \|x\|_1\\
\mbox{subject to} & Ax=Ax_0,
\EA
\EEQ
which is a convex problem in the variable $x\in\reals^n$. Given a sensing matrix $A$, \citep{Cand05,Dono05} showed that there is a {\em recovery threshold} $k$ such that solving problem~\eqref{eq:l1-dec} will always recover the solution to~\eqref{eq:l0-dec} provided this solution has at most $k$ nonzero coefficients. 

\paragraph{Recovery Threshold} The compressed sensing results detailed above show that to each sampling matrix $A$ corresponds a {\em recovery threshold} $k(A)$ such that solving problem~\eqref{eq:l1-dec} will always recover the solution to~\eqref{eq:l0-dec} provided this solution has less than $k(A)$ nonzero coefficients, and recovery will usually fail for signals with more than $k(A)$ nonzero coefficients. Our main idea in this work is to {\em use this recovery threshold to measure dictionary performance.} However, while many results bound the threshold $k(A)$ with high probability for certain classes of random matrices $A$, computing this threshold $k(A)$ given $A$ is a hard problem \citep{Band13,Wang16,Weed17}. Figure~\ref{fig:k-thresh} shows experimental recovery threshold for a poor sensing matrix $(A)_{ij} = \cos^2(ij)$ with $i,j \in [1,100]\times[1,200]$ and a Gaussian matrix.
\begin{figure}[!ht]
    \centering
    \includegraphics[width=0.38\textwidth]{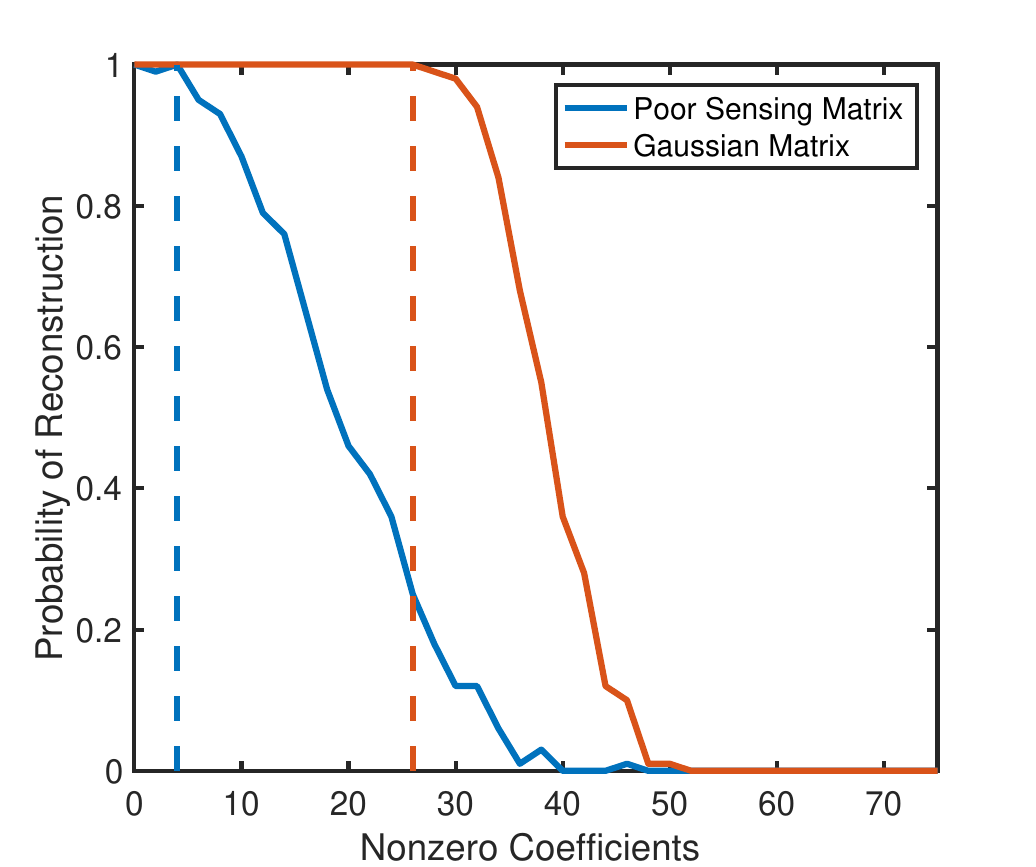}
    \caption{Empirical probability of reconstructing a random signal of dimension $200$ given $100$ linear observations by solving the LP in~\eqref{eq:l1-dec}, versus number of nonzero coefficients in the signal, for a poor sensing matrix and an i.i.d. Gaussian matrix. Dashed lines represent the empirical recovery thresholds $k(A)$ for both matrices.}
    \label{fig:k-thresh}
    
\end{figure}

Indeed, several relaxations have been derived to approximate the recovery threshold \citep{dAsp08a,Judi08,dAsp12a}, but these approximations typically only certify recovery up to signal size $\sqrt{k}$ when the optimal threshold is $k$. There is substantial evidence that this is the best that can be achieved in polynomial time, in the regimes that are relevant for compressed sensing. It can be shown for example that certifying recovery using the restricted isometry property is equivalent to solving a sparse PCA problem, which is hard in a broad sense by reduction of the planted-clique problem \citep{Bert13}. 

\paragraph{Proxies for the Recovery Threshold} In contrast with the negative results above, a simple result in \citet{Kash07} shows that the sparse recovery threshold satisfies $k(A)\geq S(A)^{-2}$, where $S(A)$ is the radius of the intersection of the unit $\ell_1$-ball with the nullspace of the sampling matrix~$A$. Directly approximating the radius of a convex polytope is of course hard \citep{Freu85,Lova92}, but the low-$M^*$ bound in e.g. \citep{Pajo86} shows that we can accurately quantify the performance of a slightly enlarged sampling matrix 
${\begin{bmatrix} A\\  G \end{bmatrix},}$
with high probability, where $G$ is a matrix containing a few additional Gaussian samples. This produces a bound on the radius (and hence on the recovery threshold) that depends only on the size of $G$ and on the $M^*$ of a section of the $\ell_1$-ball by the nullspace of the sampling matrix~$A$ (all these quantities will be precisely defined below). In what follows, we will use 
\[
k\left({\begin{bmatrix} A\\  G \end{bmatrix}}\right)
\]
as a proxy to $k(A)$. we will then use the $M^*$ bound to control this threshold and regularize dictionary learning problems. 

\paragraph{Compressed Sensing MRI} Structured acquisition in compressed sensing seeks to design dictionaries to maximize signal recovery performance while satisfying design constraints in the sampling procedure (e.g. magnetic resonance imaging hardware constraints, see~\citep{Boye16,Boye17} for a more complete discussion). In compressed sensing MRI~\citep{Lust08} for example, this means ensuring imaging samples follow a continuous path in Fourier space. More recently, structured acquisition procedures in e.g. \citep{Chau14,Boye16,Boye17} use a sampling approach based on the results of~\citep{Lust08,Cand11c}. Here, we aim at using our proxy for the recovery threshold as a tractable metric for MRI reconstruction performance. 

\paragraph{Dictionary Learning} Similarly, dictionary learning seeks to decompose and compress signals on a few atoms using a dictionary learned from the data set, instead of a predefined one formed by e.g. wavelet transforms~\citep{Mall99} or random vectors. This learning approach has significantly improved state-of-the-art performance on various signal processing tasks such as image denoising \citep{Elad06} or inpainting \citep{Mair09} for example. From a computational point of view, dictionary learning is an inherently nonconvex problem and the references above use alternating minimization to find good solutions. Furthermore, in all these cases, the dictionary learning problem is not regularized. {\em Classical methods thus learn dictionaries without proper regularization, which potentially hurts generalization performance.} One of our objective is thus to our proxy for the recovery threshold of dictionaries as a regularization term that directly improves the dictionary's performance on new data samples.

\paragraph{Contributions} Our contribution is twofold. In the spirit of smoothed analysis~\citep{Spie04}, we first show how to use the recovery threshold of a slightly enlarged dictionary matrix as a proxy for the recovery threshold of the original dictionary $A$. We then show that this threshold is controlled by the $M^*$ of a section of the $\ell_1$ ball by the nullspace of $A$. 

Then we demonstrate empirically that this $M^*$ based proxy of the recovery threshold is also a good proxy for the reconstruction (or generalization) performances of a dictionary. We do so on two applications, dictionary learning and compressed sensing MRI. 

The paper is organized as follows. In Section~\ref{s:dico} we recall the low-$M^*$ bounds and its application to sparse recovery. In Section~\ref{s:algos}, we detail our $M^*$-regularized dictionary learning algorithms. We detail numerical experiments on inpainting by dictionary learning in Section~\ref{s:numres}. Finally, we provide a $M^*$ based greedy algorithm for designing MRI sampling matrices and numerical experiments in Section~\ref{s:mri}.

\paragraph{Notations}
$B_1^n$ is the unit ball for the $l_1$ norm in $\reals^n$. Given a matrix $A\in\reals^{m\times n}$ , $\nullspace(A)$ will denote depending on the context, either the nullspace of the matrix $A$ or a matrix with its columns being an orthonormal basis of the nullspace of $A$. Given a symmetric set $X \subset \reals^n$ containing $0$ , the euclidean radius of $X$, $\diam(X)$ is defined as $\underset{x\in X}{\sup}\; \|x\|_2$.\\
Given a symmetric convex set $\mathcal{K}\subset\reals^n$ with $0\in\mathcal{K}$, we denote by $\mathcal{K}^*$ its polar set and $\|\cdot\|_{\mathcal{K}}$ the unique norm that admits $\mathcal{K}$ as unit ball.\\
Throughout the paper, we use the terms sensing matrix, dictionary or sampling matrix depending on the context but they all correspond to matrices from which observations are obtained by applying it to an original signal.

\section{Low M* dictionaries}\label{s:dico}
Our starting point is the following result by \citet{Kash07}, linking recovery thresholds and the radius of a section of the unit $\ell_1$ ball. This last quantity, while hard to estimate, then provides accurate bounds on the recovery threshold $k$ of a given sampling matrix $A$.

\begin{proposition}{\citep[Th. 2.1]{Kash07}}\label{prop:card-k}
Given $m<n\in\N$ and a coding matrix $A\in\reals^{m \times n}$ with full rank, suppose that there is some $k>0$ such that
\BEQ\label{eq:diam}
S(A)\triangleq \sup_{Ax=0} \frac{\|x\|_2}{\|x\|_1} \leq \frac{1}{\sqrt{k}}
\EEQ
then 
\[\left\{\begin{array}{cc}
     x^\mathrm{LP}=x_0 & \text{ if } \Card(x_0) < k/4,  \\
     \|x_0-x^\mathrm{LP}\|_1 \leq ~4\min_{\{\Card(y)\leq k/16\}} ~\|x_0-y\|_1 & \text{ otherwise}
\end{array}\right.
\]
where $x^\mathrm{LP}$ solves the $\ell_1$-recovery problem in~\eqref{eq:l1-dec} and $x_0$ is the original signal.
\end{proposition}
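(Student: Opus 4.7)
The plan is to combine the null-space bound furnished by $S(A)\leq 1/\sqrt{k}$ with a standard cone argument coming from the optimality of $x^\mathrm{LP}$ in~\eqref{eq:l1-dec}. Set $h := x^\mathrm{LP}-x_0$; since both $x^\mathrm{LP}$ and $x_0$ are feasible in~\eqref{eq:l1-dec} we have $Ah=0$, and the hypothesis immediately gives
\[
\|h\|_2 \;\leq\; S(A)\,\|h\|_1 \;\leq\; \|h\|_1/\sqrt{k}.
\]
This is the only role of the $S(A)$ assumption.

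Next I would derive a cone inequality from the $\ell_1$-minimality. For any $y\in\reals^n$ supported on a set $T$ of size $s$, decomposing the norm over $T$ and $T^c$ and applying the reverse triangle inequality on each block, the optimality inequality $\|x^\mathrm{LP}\|_1\leq\|x_0\|_1$ yields the standard compressibility bound
\[
\|h_{T^c}\|_1 \;\leq\; \|h_T\|_1 + 2\|x_0-y\|_1.
\]
Cauchy--Schwarz then gives $\|h_T\|_1 \leq \sqrt{s}\,\|h_T\|_2 \leq \sqrt{s}\,\|h\|_2$, and combining with the null-space bound above produces $\|h_T\|_1 \leq \sqrt{s/k}\,\|h\|_1$.

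Taking $s\leq k/16$ makes $\sqrt{s/k}\leq 1/4$, so substituting into $\|h\|_1 = \|h_T\|_1+\|h_{T^c}\|_1 \leq 2\|h_T\|_1+2\|x_0-y\|_1$ yields $\|h\|_1 \leq \tfrac{1}{2}\|h\|_1 + 2\|x_0-y\|_1$, i.e.\ $\|h\|_1 \leq 4\|x_0-y\|_1$; minimizing over $y$ with $\Card(y)\leq k/16$ then gives the robustness statement. For exact recovery when $\Card(x_0)\leq k/4$, the natural choice $y=x_0$ removes the last term and the same chain collapses to $\|h\|_1 \leq 2\|h_T\|_1 \leq \|h\|_1$. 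The main obstacle is precisely this boundary tightness: the inequality chain is saturated at $s=k/4$ and only yields a weak inequality. To upgrade this to $h=0$, one either strengthens the hypothesis to $S(A)<1/\sqrt{k}$, or enlarges $T$ by the largest coordinates of $h$ on $T^c$ and reruns the Cauchy--Schwarz step on the enlarged index set to force strict inequality, in the spirit of the usual null-space-property arguments.
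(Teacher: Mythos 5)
The paper does not actually prove Proposition~\ref{prop:card-k}; it is imported verbatim from \citet{Kash07}, so there is no in-paper argument to compare against. Your route is the standard null-space/cone argument, and the approximation half of your proof is complete and correct: the cone inequality $\|h_{T^c}\|_1 \leq \|h_T\|_1 + 2\|x_0-y\|_1$, the Cauchy--Schwarz step $\|h_T\|_1 \leq \sqrt{s}\,\|h_T\|_2$, and the null-space bound $\|h\|_2 \leq \|h\|_1/\sqrt{k}$ combine exactly as you say to give $\|h\|_1 \leq 4\|x_0-y\|_1$ whenever $s \leq k/16$.

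The genuine gap is the exact-recovery claim at $\Card(x_0)\leq k/4$, which you correctly flag but do not close, and neither of your proposed remedies is satisfactory as stated: strengthening the hypothesis to $S(A)<1/\sqrt{k}$ proves a different proposition, and ``enlarging $T$ and rerunning Cauchy--Schwarz'' is left as a gesture. The fix is in fact a two-line equality analysis of the chain you already wrote. With $y=x_0$, $T$ the support of $x_0$ and $s=\Card(x_0)\leq k/4$, you have
\[
\|h\|_1 \;\leq\; 2\|h_T\|_1 \;\leq\; 2\sqrt{s}\,\|h_T\|_2 \;\leq\; 2\sqrt{s}\,\|h\|_2 \;\leq\; \frac{2\sqrt{s}}{\sqrt{k}}\,\|h\|_1 \;\leq\; \|h\|_1,
\]
so every inequality in the chain must be an equality. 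Equality in the third step, $\|h_T\|_2=\|h\|_2$, forces $h_{T^c}=0$; equality in the first step forces $\|h_{T^c}\|_1=\|h_T\|_1$; together these give $\|h_T\|_1=0$, hence $h=0$. (Equivalently: if $h_{T^c}\neq 0$ the third inequality is strict and the chain yields the contradiction $\|h\|_1<\|h\|_1$; if $h_{T^c}=0$ the chain collapses to $\|h\|_1\leq \tfrac12\|h\|_1$.) Inserting this observation makes your proof of both assertions complete, with no need to perturb the hypothesis or to introduce the larger index sets of the usual restricted-isometry arguments.
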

This result means that the $\ell_1$-minimization problem in~\eqref{eq:l1-dec} will recover exactly all sparse signals $x_0$ satisfying $\Card(x_0)< (2S(A))^{-2}$ and that the $\ell_1$ reconstruction error for other signals will be at most four times larger than the $\ell_1$ error corresponding to the best possible approximation of $x_0$ by a signal of cardinality at most $(4S(A))^{-2}$. The quantity $S(A)=\sup_{Ax=0} {\|x\|_2}/{\|x\|_1}$ is the radius of the intersection of the unit $\ell_1$ ball with the nullspace of $A$, written
\BEQ\label{eq:K-ball}
\mathcal{K}(A)\triangleq \{x\in\reals^n: \|x\|_1 \leq 1,\, Ax=0\},
\EEQ
and thus controls the recovery threshold of the dictionary matrix $A$, i.e. the largest signal size that can provably and uniformly be recovered using the observations in $A$. In what follows, we first recall some classical results on this radius from geometric functional analysis and use them to quantify the sparse recovery thresholds of arbitrary matrices~$A$.


\subsection{Low M* Bounds}
The radius of $\mathcal{K}(A)$ can be precisely quantified for some classes of random matrices $A$, and the following result characterizes its behavior as the dimension $A$ varies, in terms of a quantity called $M^*$ defined as follows for a set $K \subset \reals^n$.
\BEQ\label{def:w}
M^*(K) \triangleq \Expect\left[\underset{x \in K}{\sup} \langle g,x \rangle \right]
\EEQ
where $g \sim \mathcal{N}(0,\idm_{n})$. The classical definition uses the uniform measure on the sphere instead of Gaussian variables. We use the Gaussian width formulation for simplicity here, but they only differ by a factor $\sqrt{n}$.


\begin{theorem}{(Low $M^*$ bound)} \label{eq:low-m*}
Given a symmetric set $K \subset \reals^n$ with $0\in K$ and an $m \times n$ matrix $A$ whose rows are independent isotropic Gaussian random vectors in $\reals^n$, the radius of a section of $K$ by the nullspace of $A$ satisfies
\[ \diam(K\cap \nullspace(A)) \leq \frac{c}{\sqrt{m}}M^*(K)\]
with probability $1-e^{-c'm}$, where $c,c'>0$ are absolute constants.
\end{theorem}
\begin{proof}
See \citep{Pajo86} or \citep[Th. 9.4.2]{Vers18} for example.
\end{proof}


The value of $M^*(K)$ is known for many convex bodies, including $l_p$ balls. In particular, $M^*(B_1^n)$ simply reduces to $\Expect[\|g\|_{\infty}]\sim\sqrt{\log n}$ asymptotically. 
This means that given $A \in \reals^{m\times n}$ a random matrix with rows being independent isotropic Gaussian vectors, Theorem~\ref{eq:low-m*} gives the bound on the radius
\[
\diam(B_1^n \cap \nullspace(A))\leq c \sqrt{\frac{\log n}{m}}
\]
with high probability, where $c$ is an absolute constant (a more precise analysis allows the $\log$ term to be replaced by $log(n/m)$). This, combined with the result of Proposition~\ref{prop:card-k}, shows that  problem~\eqref{eq:l1-dec} will recover all signals with at most $m/(4c^2 \log n)$ coefficients with high probability. Theorem~\ref{eq:low-m*} combined with Proposition~\ref{prop:card-k} is thus a fairly direct way of proving optimal bounds on the sparse recovery threshold for random matrices $A$.

\subsection{Recovery threshold of a Augmented Matrix}
The main idea here is to apply Theorem~\ref{eq:low-m*} to the intersection of the unit $\ell_1$ ball with the nullspace of $A$ defined as $\mathcal{K}(A)$ in \eqref{eq:K-ball} and get a bound on the radius of random sections of $\mathcal{K}(A)$.
Let
\[
\mathcal{K}\left({\begin{bmatrix} A\\  G \end{bmatrix}}\right)=\{x\in \reals^n:Ax=0,\,Gx=0,\,\|x\|_1\leq1\}
\]
for some $G\in\reals^{q \times n}$, we get the following corollary. 

\begin{corollary}\label{cor:mstar-A}
Let $A\in\reals^{m\times n}$ be a given matrix and $G\in\reals^{q \times n}$ be a matrix with i.i.d. Gaussian coefficients, then 
\[
\diam\left(\mathcal{K}\left({\begin{bmatrix} A\\  G \end{bmatrix}}\right)\right) \leq \frac{c}{\sqrt{q}}M^*(A)
\]
with probability $1-e^{-c'q}$, where $c,c'>0$ are absolute constants, where for simplicity, we have written 
\BEQ\label{def:mstar-A}
M^*(A)\triangleq M^*(\mathcal{K}(A))
\EEQ
the $M^*$ of the set $\mathcal{K}(A)$.
\end{corollary}
\begin{proof}
$\mathcal{K}(A)$ is symmetric and contains $0$ thus applying Theorem~\ref{eq:low-m*} to 
\[
\mathcal{K}\left({\begin{bmatrix} A\\  G \end{bmatrix}}\right) = \mathcal{K}(A) \cap \nullspace(G)
\]
yields the desired result.
\end{proof}

This last result shows that we can control the radius of $\{x\in \reals^n:Ax=0,\,Gx=0,\,\|x\|_1\leq1\}$, hence the recovery threshold of the augmented sampling matrix with high probability. We summarize the link between sparse recovery performance of $A$ and $M^*(A)$ in the following proposition, which is the main result of this section.

\begin{proposition}\label{prop:mstar-reco}
Let $A\in\reals^{m\times n}$ be a given matrix and $G\in\reals^{q \times n}$ be a matrix with i.i.d. Gaussian coefficients. Suppose $x^{LP}$ solves
\BEQ\label{eq:l0-dec-G}
\BA{ll}
\mbox{minimize} & \|x\|_1\\
\mbox{subject to} & Ax=Ax_0,\\
& Gx=Gx_0,
\EA
\EEQ
in the variable $x\in\reals^n$, then with probability $1-e^{-c'q}$, $x^{LP}=x_0$ provided
\[
\Card{x_0} \leq \frac{q}{4cM^*(A)^2}
\]
where $c,c'>0$ are absolute constants.
\end{proposition}
\begin{proof}
We simply combine the result of Corollary~\ref{cor:mstar-A} with that of Proposition~\ref{prop:card-k}.
\end{proof}

This last result means that, everything else being equal (problem dimensions $n,m$, reliability $q$, \ldots), the recovery performance of $A$ when solving problem~\eqref{eq:l0-dec-G} is controlled by $M^*(A)$. In what follows, we will use the recovery performance of problem~\eqref{eq:l0-dec-G} as a proxy for that of problem~\eqref{eq:l1-dec}. As we will see below, $M^*(A)$ is a quantity that we can both estimate and optimize.

\subsection{Estimating M*(A)} 




$M^*(A)$ can be approximated by simulation, computing
\BEQ\label{eq:l1-m*}
\BA{rl}
M^*(A)&\triangleq \Expect\left[\max_{\substack{\|x\|_1 \leq 1 \\ Ax = 0}} x^Tg \right] \\
\EA
\EEQ
Estimating $M^*(A)$ thus means solving one linear program per sample. 

Also, $M^*(A)$ is bounded above by $O(\sqrt{\log(n)})$ and below by $O(\frac{1}{\sqrt{m}})$ w.h.p.
It gives us a target relative precision for our estimate of $M^*$ in $o(\frac{1}{m})$ and produces a recipe for a randomized polynomial time algorithm for estimating $M^*(A)$. In fact, following \citep{Bour88,Gian97,Gian05}, we have the following bound. 
\begin{proposition}
Given a matrix $A\in \reals^{m \times n}$, $0<\delta,\beta<1$, we draw $N$ points $g_i \sim \mathcal{N}(0,\idm_n)$ with
$N=\lceil\frac{c \log(2/\beta)}{\delta^2}\rceil+1$
where $c$ is an absolute constant, then
\[
\left|M^*(A)-\frac{1}{N}\sum_{i=1}^N \|g_i\|_{\mathcal{K}(A)^*}\right| \leq \delta M^*(A)
\]
with probability $1-\beta$.
\end{proposition}


We illustrate the result of Proposition~\ref{prop:mstar-reco} by representing the lower bound on the recovery threshold for a Gaussian matrix. We see in Figure~\ref{fig:m*-gauss} that the $M^*$ based lower bound on the recovery threshold that we computed fit reasonably well the frontier between the black (never recovers) and white (always recover) area which is an upper bound on the true sparse recovery threshold

\begin{figure}
    \centering
    \includegraphics[width=0.55\textwidth]{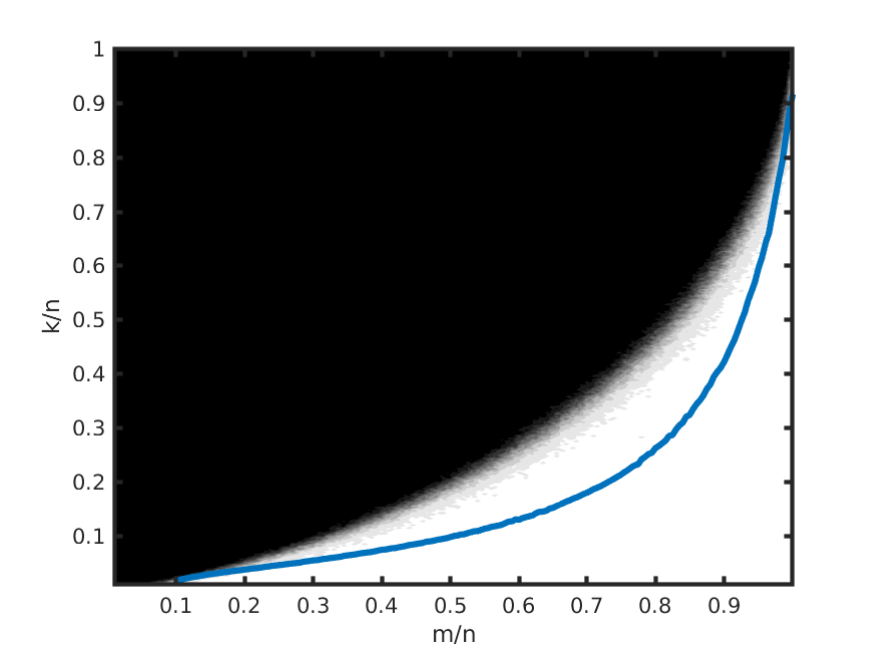}
    \caption{Let $n=200$, $G\in\reals^{n\times n}$ a Gaussian Matrix, $A_m\in\reals^{m\times n}$ the matrix constituted with the $m \leq n$ first rows of $G$. The greyscale plot in the background correspond to the empirical probability (black is $0$ and white is $1$) of recovering (with precision $10^{-8}$) a signal $x_0\in\reals^n$ with $\mathbf{Card}(x_0) = k$ by solving \eqref{eq:l1-dec} with $A=A_m$. The blue curve represents the bound $\tfrac{20}{4cM^*(A_{m-20})^2}$ of Proposition~\ref{prop:mstar-reco} with $c$ chosen such that the blue curve fit the best the black-white border. }
    \label{fig:m*-gauss}
    
\end{figure}

In the following we illustrate the interest of the $M^*$ proxy in two applications, dictionary learning and MRI sampling.

\section{Algorithms for Regularized Dictionary Learning}\label{s:algos}
We seek to use the $M^*$ quantity defined above as a penalty in various dictionary learning problems, to improve generalization performance, i.e. the performance of learned dictionaries on new data points. In general, given a dictionary learning task which involves minimizing a loss $l : A \in \reals^{m \times n} \xrightarrow{} l(A) \in \reals$ with respect to a dictionary matrix $A$, on an admissible set $\mathcal{C}$, we will focus on the penalized loss minimization problem
\BEQ\label{eq:pen-m*}
\BA{ll}
\mbox{minimize} & l(A) + \lambda M^*(A) \\
\mbox{subject to} & A \in \mathcal{C}
\EA
\EEQ
in the variable $A\in\reals^{m \times n}$.

\subsection{Alternating Minimization}\label{ss:opt-m*} 
For $F$ a matrix in $\reals^{n \times n-m}$ such that $F^TF = \idm_{n-m}$, let's define 
\BEQ\label{eq:m*-fun}
\BA{cc}
H(F) &= \Expect\left[\max_{\|Fy\|_1 \leq 1} y^Tg \right]\\
&= \Expect\left[\min_{F^Tx=g} \|x\|_\infty \right]
\EA
\EEQ
with  $g\sim{\mathcal N}(0,\idm_{n-m})$, the second line being obtained by duality.
Let $A \in \reals^{m\times n}$ be a dictionary matrix with linearly independent rows and $F$ be a orthogonal basis of $\nullspace(A)$, i.e. $F^TF=\idm_{n-m}$ and $AF=0$. $\mathcal{K}(A)$ defined as in \eqref{eq:K-ball} can be rewritten 
\BEQ\label{eq:K-ball_F}
\mathcal{K}(A) = \left\{Fy, y\in \reals^{n-m} , \|Fy\|_1 \leq 1 \right\}
\EEQ
We notice that for that choice of $F$, $H(F) = M^*(A)$, the following lemma allows to compute stochastic gradients of $H(F)$. 

\begin{lemma}\label{lem:pgrad-m*}
Given $g\in\reals^{n-m}$, the function with input $F\in\reals^{n\times n-m}$ defined as
\BEQ \label{eq:fpgrad-m*}
\BA{rll}
\nu_g(F) \triangleq & \mbox{min.} & ||x||_{\infty} \\
& \mbox{s.t.}& F^Tx  = g
\EA
\EEQ
where the minimization is performed in the variables $x \in \reals^n$, has generalized derivatives at full-rank matrices $F$ of the form 
\BEQ\label{eq:pgrad-m*}
\partial\nu_g(F) = \mathrm{Conv}\left(\{-x_g^*(F)y_g^{*T}(F),\; \text{ with } (x_g^*(F),y_g^*(F)) \text{ any primal-dual solution of } \eqref{eq:fpgrad-m*}\}\right)
\EEQ
In addition when \eqref{eq:fpgrad-m*} admits a unique primal-dual solutions $(x_g^*(F),y_g^*(F))$, $\nu_g$ is differentiable at point $F$ and $\nabla \nu_g(F) = -x_g^*(F)y_g^{*T}(F)$.
\end{lemma}
\begin{proof}
See \citep[Theorem 5.1, Proposition 5.3]{De19}.
\end{proof}
\begin{remark}
For $F$ a full-rank matrix drawn at random, primal-dual solution of Problem~\eqref{eq:fpgrad-m*} is unique almost surely. In practice we will always deal with $F$ such that the primal-dual solution is unique (by adding negligible perturbations to pathological $F$s).
\end{remark}

Introducing the variable $F$ in the formulation $\eqref{eq:pen-m*}$ leads to the new problem
\BEQ\label{eq:learn-pen}
\BA{ll}
\mbox{minimize} & l(A) + \lambda H(F)\\   
 \mbox{subject to} & A \in \mathcal{C}, AF = 0\\
&  F^TF = I
\EA\EEQ
Imposing $AF = 0$ may yield numerical issues and we can replace the hard constraint by a penalty on $||AF||_F^2$, where $||\cdot||_F$ is the Frobenius norm. The problem then becomes 
\BEQ\label{eq:learn-pen-pen}
\BA{ll}
\mbox{minimize} & l(A) + \lambda H(F) + \mu||AF||^2_F\\
\mbox{subject to} & A \in \mathcal{C}\\
 & F^TF = I 
 \EA
\EEQ
in the variables $A\in\reals^{m \times n}$ and $F\in\reals^{n \times n-m}$. 

Assuming $l(A)$ is easy to minimize, a classical technique to solve the above problem involve alternate minimization in $A$ and $F$. Thus all the algorithms described in the next parts will then follow the generic alternating minimization structure of Algorithm \ref{algo:generic} .
\vspace{-0.1cm}
\begin{algorithm}[!ht]
\caption{Generic Alternate Minimization Algorithm}
\label{algo:generic}
\begin{algorithmic}
\STATE \algorithmicrequire\; Initial $A$, number of iterations $n_{\text{iter}}$, penalties $\mu$,$\lambda$, stepsize $\tau$, number of gradient steps $n_{sgd}$.
\FOR{1 to $n_{\text{iter}}$}
\STATE Update $A := \argmin_{A \in \mathcal{C}} l(A) + \mu||AF||^2_F$,
\STATE Update $F := \textbf{SGD}_{M^*}(\nullspace(A),\tau,n_{sgd},\mu,\lambda)$, \hspace{6.5cm} (see Algorithm~\ref{algo:sgd})
\ENDFOR
\STATE \algorithmicensure\;Dictionary matrix $A$.
\end{algorithmic}
\end{algorithm}

The variable $F$ lies in the Stiefel manifold, hence minimizing in $F$ can be performed using stochastic gradient descent on the Stiefel manifold  
\BEQ\label{eq:stiefel}
\mathcal{M} =\left\{F\in\reals^{n\times n-m} : F^TF = I_{n-m} \right\}.
\EEQ
using the stochastic derivative obtained in Lemma~\ref{lem:pgrad-m*}.
Updates consist in projecting the stochastic gradient on $T_F\mathcal{M}$, the tangent space of $\mathcal{M}$ at the current point~$F$, make a gradient step in this tangent space and finally re-project the result on the manifold to get the new point. The resulting stochastic gradient descent algorithm is described in Algorithm~\ref{algo:sgd}.
\vspace{-0.1cm}
\begin{algorithm}[!ht]
\caption{Stochastic Gradient Descent on $\mathcal{M}$, $\textbf{SGD}_{M^*}(F_0,\tau,n_{sgd},\mu,\lambda)$}
\label{algo:sgd}
\begin{algorithmic}
\STATE \algorithmicrequire\; Initial $F_0 \in \reals^{n\times n-m}$, penalties $\mu,\lambda$, stepsize $\tau$, number of gradient steps $n_{sgd}$.
\STATE $F := F_0$.
\FOR{1 to $n_{sgd}$}
\STATE Sample $g \sim \mathcal{N}(0,\idm_{n-m})$,
\STATE $d := \lambda\nabla\nu_g(F) + 2\mu A^TAF $,
\STATE Update $F := \text{Proj}_{\mathcal{M}}(F -  \tau\text{Proj}_{T_F\mathcal{M}}(d))$, 
\ENDFOR
\STATE \algorithmicensure\;Matrix $F$.
\end{algorithmic}
\end{algorithm}


In practice the $SGD$ on $F$ will start at $\nullspace(A)$ instead of the current iterate of $F$.\\
Lemma~\ref{lem:pgrad-m*} could have also been applied to obtain stochastic gradient of $M^*(A)$, and one could solve \eqref{eq:pen-m*} using stochastic gradient descent directly. However in our dictionary learning experiments, results and computational time were much less satisfying with this method than when introducing $F$.  

\subsection{Compression by Dictionary Learning}\label{ss:dict-learn} Our objective here is to learn an over-complete dictionary that has good compressed sensing properties, i.e. a low $M^*$ in our setting, to optimize dictionary performance out-of-sample.

\subsubsection{Dictionary Learning}\label{sss:classic-dict} Let us recall the dictionary learning problem and its classical formulation for compression. Given a set of training image patches $Y = (Y_1,\dots,Y_m) \in \reals^{n \times m}$ and a sparsity target $S$ , the goal is to find an over-complete dictionary $D = (D_1,\dots,D_p) \in \reals^{n \times p} (n < p  \ll m)$ and a representation $X = (X_1,\dots,X_m) \in \mathbb{R}^{p \times m}$ which minimize the training loss
$$\sum_i ||Y_i - DX_i||^2_2 = ||Y-DX||^2_F.$$
with $||X_i||_0 \leq S $. One classical regularization strategy is to normalize the columns of the dictionary, often called atoms. This prevents the problem to be ill-posed with infinitely many solutions differing only by the norms of their atoms. The problem then becomes
\BEQ\label{eq:dico-classic}
\BA{ll}
\mbox{minimize} & ||Y-DX||^2_F\\
 \mbox{subject to} & ||D_i||_2 = 1,\quad\mbox{$i=1,\ldots,p$}\\
 & ||X_j||_0 \leq S,\quad\mbox{$j=1,\ldots,m$}
 \EA
\EEQ
in the variables $D\in\reals^{n\times p}$, $X\in\reals^{p\times m}$. 

A standard way to deal with problem~\eqref{eq:dico-classic} is the KSVD Algorithm \citep{Elad06}. This is an alternate minimization algorithm between the dictionary $D$ and the representation $X$. In the minimization with respect to $X$, Orthogonal Matching Pursuit (see e.g. \citep{Cai11a} is used to find an approximate solution with a given cardinality. For the minimization with respect to the dictionary $D$, the updates are made column by column. Given $X$, an update for the column $j$ then consists in solving
\BEQ\label{eq:dico-update}
\BA{ll}
\mbox{minimize} & ||Y-\sum_{i \neq j}D_iX^{(i)} - D_jX^{(j)}||^2_F\\
 \mbox{subject to} & ||D_j||_2 = 1 
 \EA
\EEQ
in the variable $D_j \in \reals^{n}$, with $X^{(i)}$ being the $i$-th row of $X$. If one allows the minimization to include the variables $(D_j ,X^{(j)})$, this comes down to finding the rank one matrix $D_jX^{(j)}$ that best approximates $M = Y-\sum_{i \neq j}D_iX^{(i)}$ in term of Frobenius norm. This can be obtain by performing a rank one SVD of $M$. One significant advantage of this method is that it directly gives a normalized update for $D_j$ and also guarantees that the dictionary updates are descent steps.
KSVD is detailed in Algorithm \ref{algo:KSVD}
\begin{algorithm}[!ht]
\caption{KSVD}
\label{algo:KSVD}
\begin{algorithmic}
\STATE \algorithmicrequire\;Initial Dictionary $D_0$, training patches $Y = [Y_1,\dots,Y_m]$, sparsity level $S$, number of iterations $n_{\text{iter}}$.
\STATE $D := D_0$.
\FOR{1 to $n_{\text{iter}}$}
\FOR{j := 1 to $m$}
\STATE $X_j := \textbf{OMP}(D,Y_j,S)$,
\ENDFOR
\FOR{l := 1 to $p$}
\STATE $\Omega := supp(X^{(l)})$, where $X^{(l)}$ is the l-th row of $X$,
\STATE $E := Y - \sum_{i \neq l}d_iX^{(i)}$,
\STATE $[U,S,V] := \textbf{SVD}(E_{\Omega})$,
\STATE $D_l := U_1$,
\STATE $X^{(l)} := S_{11}V_1^T $,
\ENDFOR
\ENDFOR
\STATE \algorithmicensure\;Dictionary $D = [d_1,\dots,d_p]$.
\end{algorithmic}
\end{algorithm}

\subsubsection{Dictionary Learning with  M* Penalization} The penalized formulation introduced in \eqref{eq:learn-pen-pen} can be used to learn a dictionary with low $M^*$. The penalized learning problem is then written
\BEQ\label{eq:dico-m*-pre}
\BA{ll}
\mbox{minimize} & ||Y-DX||^2_F + \lambda H(F) + \mu ||DF||^2_F\\
 \mbox{subject to} & ||D_i||_2 = 1,\quad\mbox{$i=1,\ldots,p$}\\
 & ||X_j||_0 \leq S,\quad\mbox{ $j=1,\ldots,m$}\\
 & F^TF = I_{p-n}
 \EA
\EEQ
in the variables $X\in\reals^{p\times m},D\in\reals^{n\times p},F\in\reals^{p\times p-n}$.

There is no change in the updates of the representation~$X$ when everything else is fixed compared to the classical setting. However for the dictionary updates in the variable~$D$, the addition of the penalty term $\mu ||DF||^2_F$ prevents the use of the SVD approach from Algorithm \ref{algo:KSVD}. Instead, the new dictionary is chosen to annihilate the gradient of the loss with respect to $D$ and then projected on the admissible set $\mathcal{C}$. If $\mathcal{C}$ is chosen to be the set of dictionaries with normalized columns as in KSVD, the projection changes the current value of $M^*$, since normalizing each column changes the nullspace of the matrix. To avoid this effect, we can take $\mathcal{C_*} = \{D\;|\; \max(||D_i||_2) = 1\}$. This set contains the previous one and the projection on it simply reduces to divide all the coefficients of the dictionary by $\max(||D_i||_2)$ which has no effect on the $M^*$. Overall, the $M^*$-regularized dictionary learning problem is then written
\BEQ\label{eq:dico-m*}
\BA{ll}
\mbox{minimize} & ||Y-DX||^2_F + \lambda H(F) + \mu ||DF||^2_F\\
 \mbox{subject to} & \max((||D_i||_2)_{i\in[1,p]}) = 1 \\
 & ||X_j||_0 \leq S,\quad\mbox{ $j=1,\ldots,m$}\\
 & F^TF = I_{p-n}
 \EA
\EEQ
in the variables $X\in\reals^{p\times m},D\in\reals^{n\times p},F\in\reals^{p\times p-n}$.

Finally, the update with respect to $F$ is done as in Algorithm~\ref{algo:generic}, using a stochastic gradient descent to minimize $M^*$ on the Stiefel Manifold. The complete $M^*$-penalized dictionary learning algorithm is then detailed as Algorithm~\ref{algo:pen-dico}.
\begin{algorithm}[!ht]
\caption{Penalized Dictionary Learning}
\label{algo:pen-dico}
\begin{algorithmic}
\STATE \algorithmicrequire\;Initial Dictionary $D_0$, Initial nullspace $F_0$, training patches $Y = [Y_1,\dots,Y_m]$, sparsity level $S$, number of iterations $n_{\text{iter}}$, regularization parameters $\mu,\lambda$, stepsize $\tau$, number of gradient iterations $n_{sgd}$
\STATE $D := D_0$, $F := F_0$.
\FOR{1 to $n_{\text{iter}}$}
\FOR{j := 1 to $m$}
\STATE $X_j := \textbf{OMP}(D,Y_j,S)$,
\ENDFOR
\STATE $D_* : = \underset{D\in\reals^{n\times p}}{\argmin\;} \|DX-Y\|^2_F + \mu\|DF\|^2_F$,
\STATE $D := \text{proj}_{\mathcal{C_*}}(D_*)$,
\STATE $F := \textbf{SGD}_{M^*}(\nullspace(D),\tau,n_{sgd},\mu,\lambda)$,
\ENDFOR
\STATE \algorithmicensure\;Dictionary $D$.
\end{algorithmic}
\end{algorithm}

\subsection{Inpainting by Dictionary Learning}\label{ss:inpainting} Inpainting is a particular class of denoising problems for imaging. This is a situation where the noise is multiplicative and takes its values in $\{0,1\}$. For an image $I$ of size \\N $\times$ N the noise matrix is called a mask denoted $B\in\{0,1\}^{N\times N}$. What is observed is a noisy version of the image $I\odot B$ (where $\odot$ is the Hadamard product of matrices) which is basically $I$ with missing parts appearing as black holes. Dictionary learning by patches has been adapted to the inpainting problem giving good results (see e.g. \citep{Mair08a}). In this section, we adapt the $M^*$ penalized algorithm to the inpainting setting.

\subsubsection{Inpainting Problems} Given some training patches $Y = [Y_1,\dots,Y_m] \in \mathbb{R}^{n \times m}$ and a mask $B = [B_1,\dots,B_m] \in \mathbb{R}^{n \times m}$, the idea of inpainting by patches is essentially the same as the classical dictionary learning principle. It seeks to find a sparse representation of the training patches using a few learned atoms. However, only $B \odot Y$ is accessible, meaning that information is only available on some pixels of each patch. Due to the intrinsic sparse structure of natural images it is reasonable to think that there is enough information in the visible pixels to learn a good dictionary to fill the masked parts of the image. The learning task in this case is then simply
\BEQ\label{eq:inpainting}
\BA{ll}
\mbox{minimize} & ||B \odot (Y-DX)||^2_F \\
 \mbox{subject to} & ||D_i||_2 = 1,\quad\mbox{$i = 1,\ldots,p$}\\
 &||X_j||_0 \leq S,\quad\mbox{$j = 1,\ldots,m$}
 \EA
\EEQ
in the variables $X\in\reals^{p\times m},D\in\reals^{n\times p}$. 

Due to the Hadamard product with $B$, the KSVD algorithm cannot be directly applied to solve this problem. \cite{Mair08a} presented a weighted KSVD algorithm that will be referred to as wKSVD in the following.
It uses an iterative method detailed in \citep{Sreb03a} to approximate a solution of the weighted rank one approximation problem encountered when trying to update the dictionary column by column as in KSVD. This consists in solving the following 
\BEQ\label{eq:rank-one-w}
\BA{ll}
\mbox{minimize} & ||W \odot (M-A)||^2_F \\
 \mbox{subject to} & \text{rank}(A) = 1
 \EA
\EEQ
with respect to the matrix $A \in \reals^{n\times m}$, with $M \in \reals^{n\times m}, W \in \reals_{+}^{n\times m}$. wKSVD is detailed as Algorithm~\ref{algo:wKSVD}.
\begin{algorithm}[!ht]
\caption{Weighted KSVD Algorithm}
\label{algo:wKSVD}
\begin{algorithmic}
\STATE\algorithmicrequire\;Initial Dictionary $D_0$, training patches $Y = [Y_1,\dots,Y_m]$, sparsity level $S$, number of iterations $n_{\text{iter}}$, number of intermediate iterations $n_{dico}$.
\STATE $D := D_0$.
\FOR{1 to $n_{\text{iter}}$}
\FOR{j := 1 to $m$}
\STATE $X_j :=\textbf{OMP}(diag(B_j)D,B_j\odot Y_j,S)$,
\ENDFOR
\FOR{l := 1 to $p$}
\STATE $\Omega := supp(X^{(l)})$,
\STATE $E :=Y - \sum_{i \neq l}d_iX^{(i)}$,
\FOR{1 to $n_{dico}$}
\STATE $E_B := B \odot E +(\ones-B)\odot d_{l}X^{(l)}$,
\STATE $[U,S,V] := \textbf{SVD}(E_{B,\Omega})$,
\STATE $d_l := U_1$,
\STATE $X^{(l)} := S_{11}V_1^T $,
\ENDFOR
\ENDFOR
\ENDFOR
\STATE\algorithmicensure\; Dictionary $D = [d_1,\dots,d_p]$.
\end{algorithmic}
\end{algorithm}
\subsubsection{M* Penalization for Inpainting} As for classical for dictionary learning, an $M^*$ penalty can be added to the classical loss, with the admissible set becoming $\mathcal{C_*} = \{D\;|\; \max(||D_i||_2) = 1\}$ and the penalized algorithm is modified using an iterative method during the dictionary update step in $D$. Indeed this update corresponds to solving the problem 
\BEQ\label{eq:dico-updt-inp-m*}
\BA{ll}
\mbox{minimize} & ||B \odot (Y-DX)||^2_F + \mu||DF||_F\\
 \mbox{subject to} & \max((||D_i||_2)_{i\in[|1;p|]}) = 1 
 \EA
\EEQ
in the variable $D \in \reals^{n \times p}$. 

One can set $Y_B = B \odot Y + (\ones - B)\odot DX$ and rewrite the loss above as $||Y_B - DX||_F + \mu||DF||_F$. The variable $Y_B$ takes the values of the training patches matrix $Y$ on the observed pixels and the current values of $DX$ on the masked ones. Minimizing $||Y_B - DX||_F + \mu||DF||_F$ with respect to $D$, with $Y_B$ fixed, can be solved as in the classical compression case. 

This procedure can be seen as a missing values estimation problem, where given a matrix of observations $Y$ with some missing values (the values of the masked pixels), one tries to find a dictionary $D$ that minimize the previous error. Hence setting $Y_B$ as detailed above consists in an estimation step where the missing values are replaced by the current estimate $DX$. Then one performs a minimization step on $D$ to update the current estimate. This is done in an iterative setting and pseudo code for $M^*$ penalized inpainting is described in Algorithm \ref{algo:pen-inpainting}.

\begin{algorithm}[!ht]
    \caption{Penalized Dictionary Learning for Inpainting}
    \label{algo:pen-inpainting}
    \begin{algorithmic}
    \STATE \algorithmicrequire\;Initial Dictionary $D_0$, initial nullspace $F_0$, training patches $Y = [Y_1,\dots,Y_m]$, mask for on the training patches $B = [B_1,\dots,B_m]$, sparsity level $S$, number of iterations $n_{\text{iter}}$, regularization parameters $\mu,\lambda$ , stepsize $\tau$, number of gradient iterations $n_{sgd}$, number of intermediate iterations $n_{dico}$.
    \STATE $D := D_0$, $F := F_0$.
    \FOR{1 to $n_{\text{iter}}$}
    \FOR{j := 1 to $m$}
    \STATE $X_j := \textbf{OMP}(diag(B_j)D,B_j\odot Y_j,S)$,
    \ENDFOR
    \FOR{1 to $n_{dico}$}
    \STATE $Y_B := B\odot Y + (\ones - B)\odot DX$,
    \STATE $D_* := \underset{D\in\reals^{n\times p}}{\argmin\;} \|DX-Y_B\|^2_F + \mu\|DF\|_F^2$,
    \STATE $D := \text{proj}_{\mathcal{C_*}}(D_*)$,
    \ENDFOR
    \STATE $F := \textbf{SGD}_{M^*}(null(D),\tau,n_{sgd},\mu,\lambda)$,
    \ENDFOR
    \STATE \algorithmicensure\; Dictionary $D$.
    \end{algorithmic}
    \end{algorithm}

\section{Numerical Experiments on Dictionary learning}\label{s:numres}
This section is dedicated to experimental results obtained using the previously described framework. The optimization toolbox Manopt \citep{Boum14a} was used to perform stochastic gradient descent on the Stiefel manifold, together with the SPAMS toolbox \citep{Mair09} to perform the OMP algorithm. All the tests were done on grayscale images of size $512 \times 512$. The size of the patches has been set to $8 \times 8$, meaning $n = 64$. The initial dictionaries $D_0$ are normalized independent mean zero and unit variance Gaussian vectors except for the the last one being the constant vector of unit norm which remains unmodified by the various algorithms to capture the mean information. The ratio $\frac{\mu}{\lambda}$ is set to $10^{-4}$, $n_{sgd} = 100$ steps and the stepsize $\tau$ of the SGD with Manopt is fixed to $10^{-2}$.

\subsection{Compression Experiments}
Compression experiments are detailed in Section~\ref{ss:comp-exp-sup} in the supplementary materials. The compression problem being much simpler than the inpainting one, we couldn't observe significant improvement when using our $M^*$ regularization strategy on it. Nonetheless, we observe a better robustness to bad initialization of the dictionary when adding $M^*$ penalty.

\subsection{Inpainting Experiments}
Following the setting of Section \ref{ss:inpainting}, with the number of atoms in the dictionaries set to $p = 128$ and the number of training patches to $m = 150p$. Experiments are based on a set of 14 gray scale $512 \times 512$ images and two masks, one representing cracks and one with text (see Figure~\ref{fig:inpaint-images}). For simplicity, in all these inpainting experiments, the regularization parameter $\mu$ has been fixed to $10^8$,  the same order of magnitude as the training loss $||Y-DX||^2_F$ for our experiments. Of course, results would further improve with $\mu$ chosen adaptively.

Reconstructing a masked image given a dictionary $D$ is done by solving \eqref{eq:inpainting} with respect to $X$ only, with $B \odot Y$ the matrix of all the overlaying patches of the masked image. 

\begin{figure}[!ht]
    \centering
    \begin{tabular}{ccc}
        \includegraphics[scale=0.865]{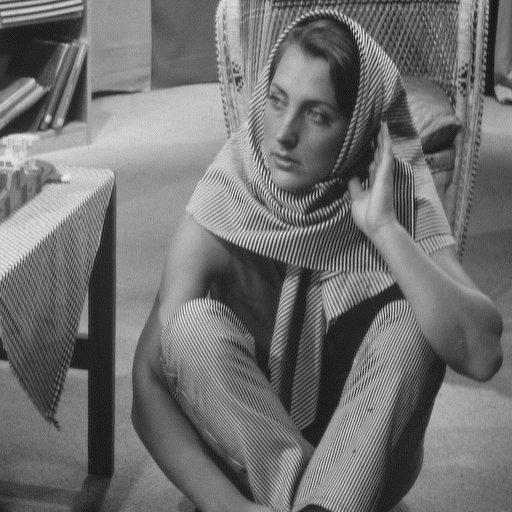}&
        \includegraphics[scale=0.122]{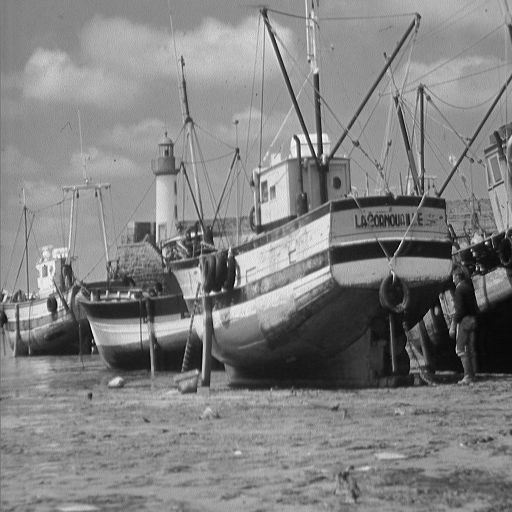}&
        \includegraphics[scale=0.122]{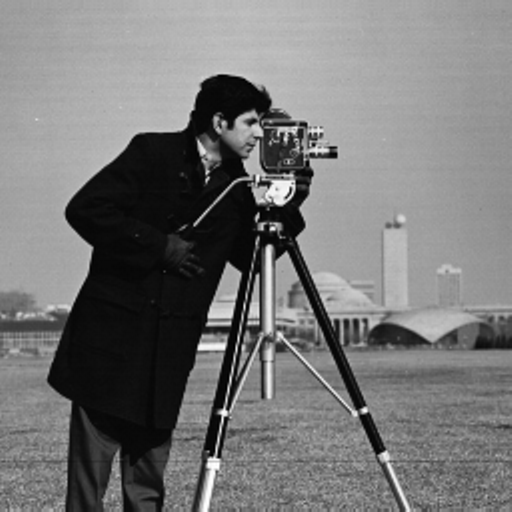}\\
        \includegraphics[scale=0.122]{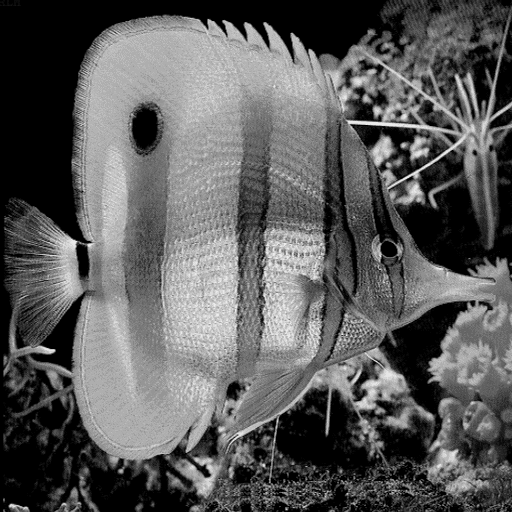}&
        \includegraphics[scale=0.122]{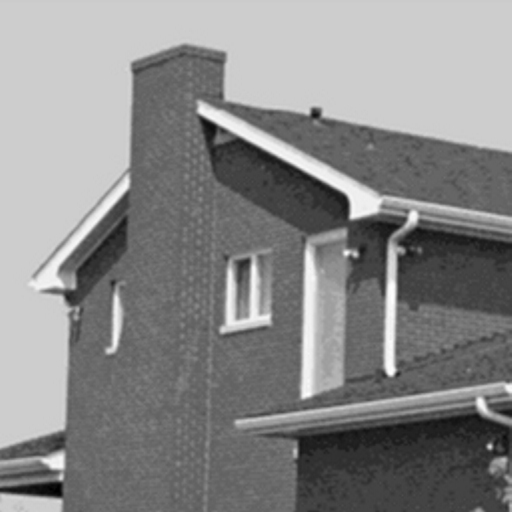}&
        \includegraphics[scale=0.122]{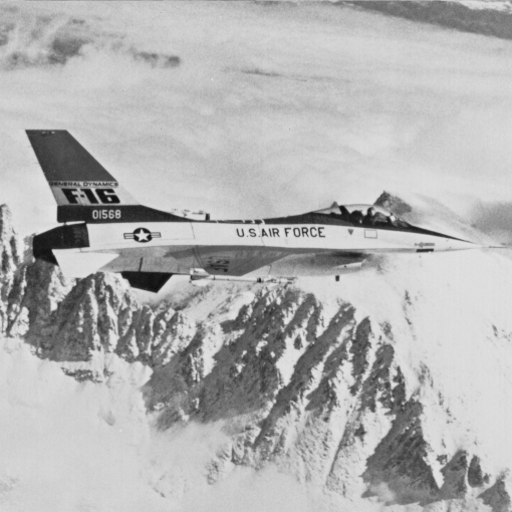}\\
        \includegraphics[scale=0.122]{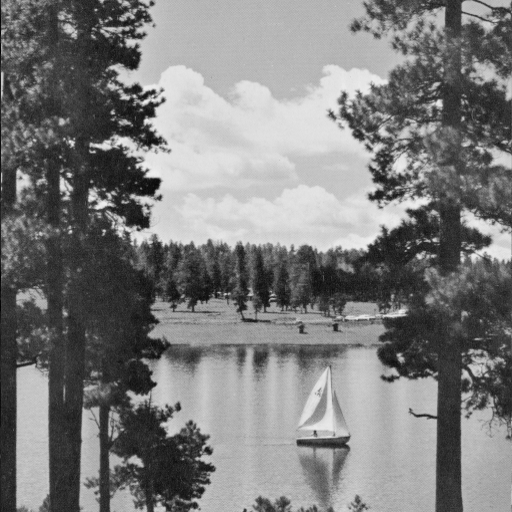}&
        \includegraphics[scale=0.122]{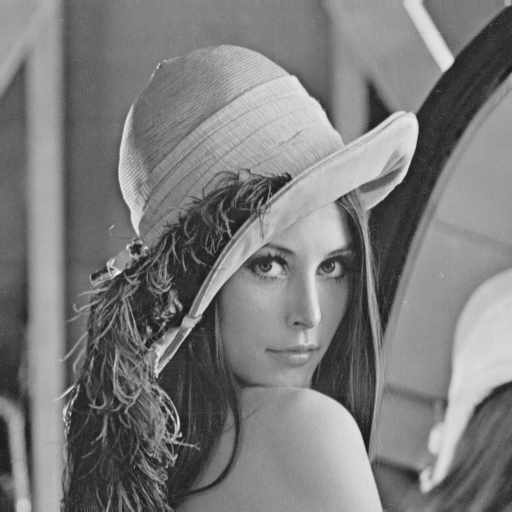}&
        \includegraphics[scale=0.122]{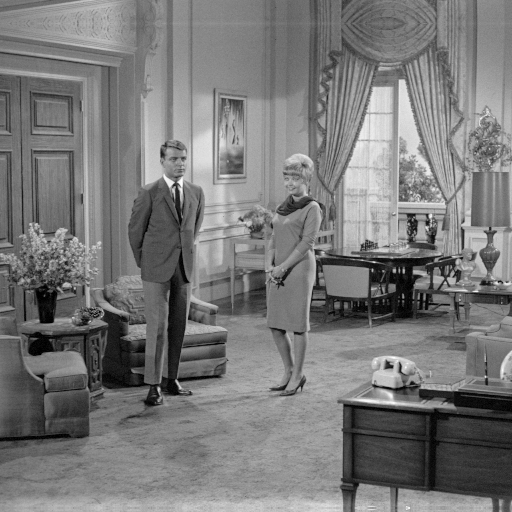}\\
        \includegraphics[scale=0.122]{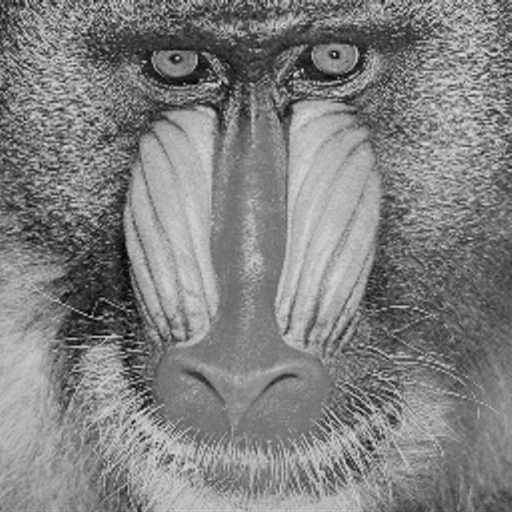}&
        \includegraphics[scale=0.122]{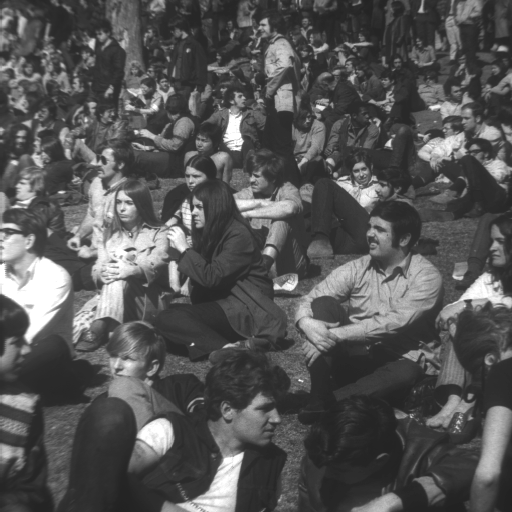}&
        \includegraphics[scale=0.122]{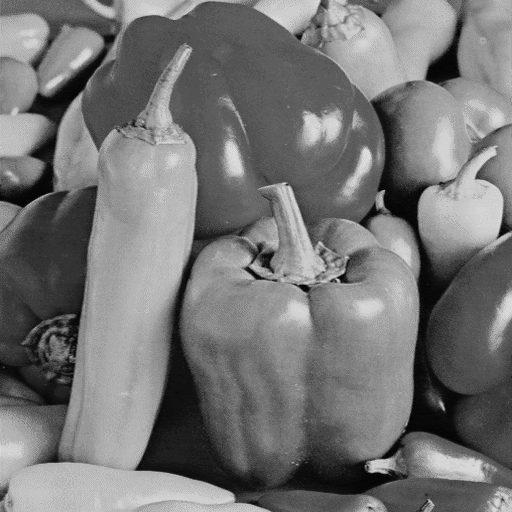}\\
        \includegraphics[scale=0.122]{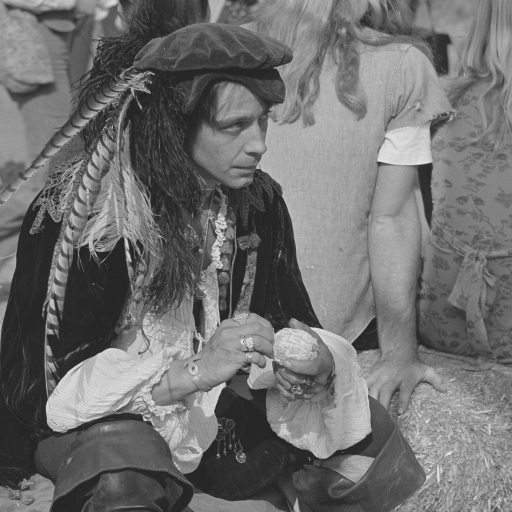}&
        \includegraphics[scale=0.122]{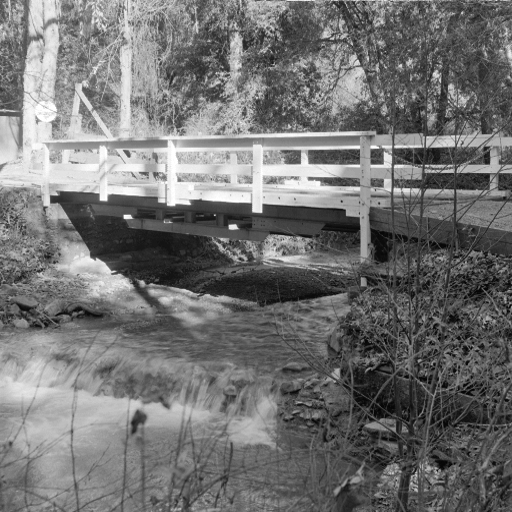}&
        \includegraphics[scale=0.122]{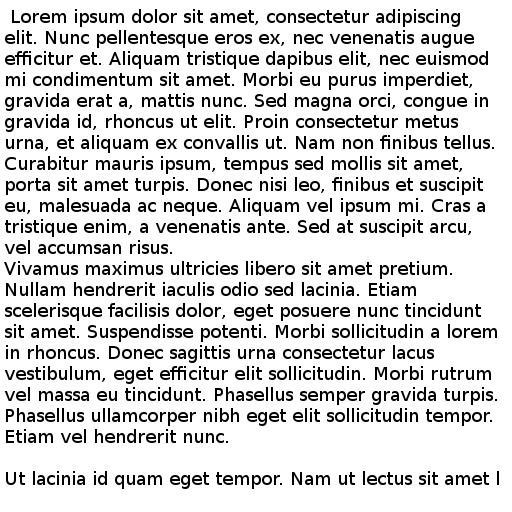}\\
        &\includegraphics[scale=0.122]{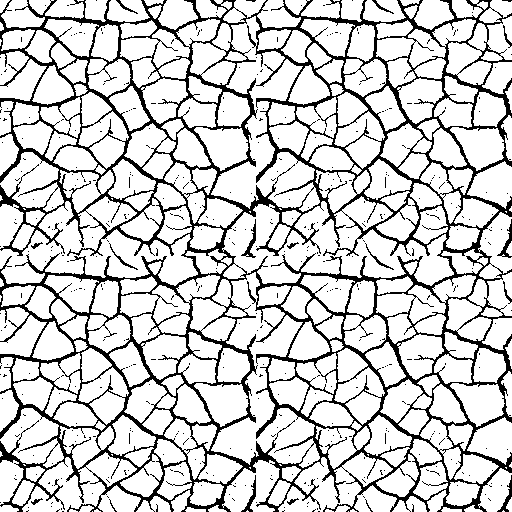}&

    \end{tabular}
    \caption{Images and Masks Used for Inpainting Experiments. From Top Left to Bottom Right Line by Line : "barbara", "boat", "cameraman", "fish", "house", "jetplane", "lake", "lena", "living-room", "mandril", "people", "peppers", "pirate", "walkbridge", "text", "cracks".}
    \label{fig:inpaint-images}
    \vspace{-0.2cm}
\end{figure}

For a given mask $B$, a given image $I$ and a training sparsity $S$, $m$ masked patches are selected randomly in the image $B\odot I$. Then both wKSVD and $M^*$ penalized algorithms are run for $50$ iterations on these training patches with a training sparsity $S$ to obtain the dictionaries $D_S(I,B)$ and $D_S^{\mu}(I,B)$.

To reconstruct a new masked image $B'\odot I' \in \reals^{512\times 512}$ thanks to a dictionary $D$, all the $(512-8)\cdot(512-8) = 254016$ patches of $I'$  are gathered in a matrix $Y = \reals^{64\times 254016}$. Compared with the compression setting, all the $8\times 8$ patches are used for the reconstruction. For simplicity, $B' \odot Y$ will represent the matrix of masked patches, even if $B'$ has not the right dimension, and corresponds to the set of all patches of the mask. We set
\BEQ\label{eq:rep-fun-inp}
\BA{rll}
Y_k(D) &\triangleq DX   &\\
 X &= \mbox{argmin.} & ||B'\odot(Y-DX)||^2_F\\
 &\mbox{s.t.} & ||X_j||_0 \leq k,\quad\mbox{$j=1,\ldots,25016$}
 \EA
\EEQ
$Y_k(D)$ is the approximation of the patches $ Y$ through $D$ with a reconstruction sparsity of $k$ when only $B' \odot Y$ is observed. An approximation $I'_{B'}$ of $I'$ is then reconstructed from patches $Y_k(D)$ by recasting them to an image and simply averaging their overlaying parts. The final reconstructed image is $B'\odot I' + (\ones - B')\odot I'_{B'}$ since $I'$ was already known on the pixels $p$ where $B'(p) = 1$ and the approximation $I'_{B'}$ is only used for the pixels where $B'(p) = 0$. Figure~\ref{fig:inp-text} shows example of masked reconstructed image using dictionaries learned by wKSVD and $M^*$ penalization.

\begin{figure}[!ht]
\centering
    \begin{tabular}{cc}
        \includegraphics[width = 0.4\linewidth,height=0.33\linewidth]{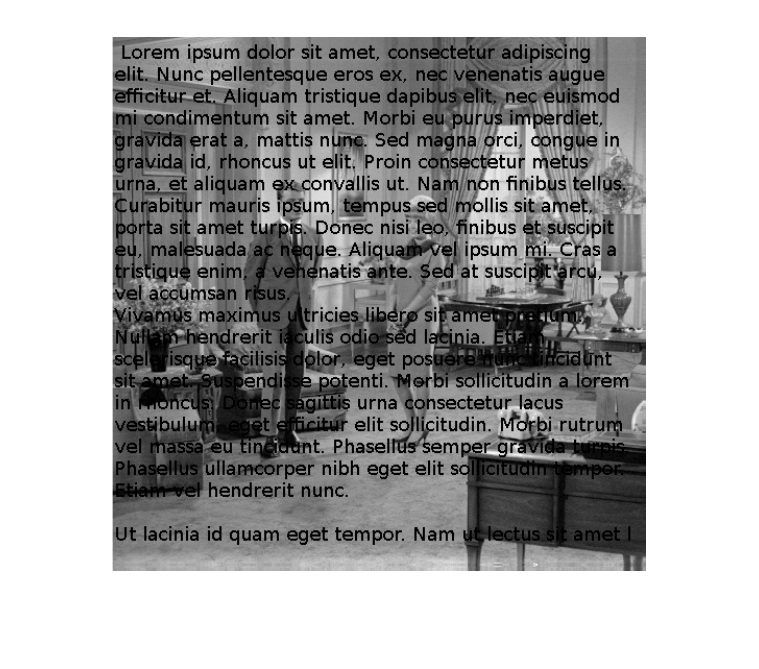}&\includegraphics[width = 0.4\linewidth,height=0.33\linewidth]{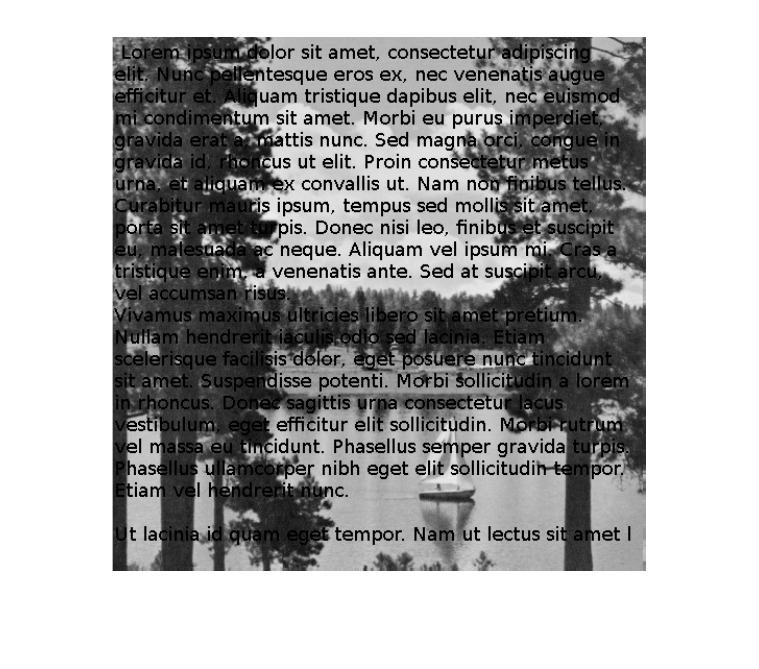}\vspace{-1cm}\\
        \vspace{-0.6cm}
        \includegraphics[width =0.4\linewidth,height=0.33\linewidth]{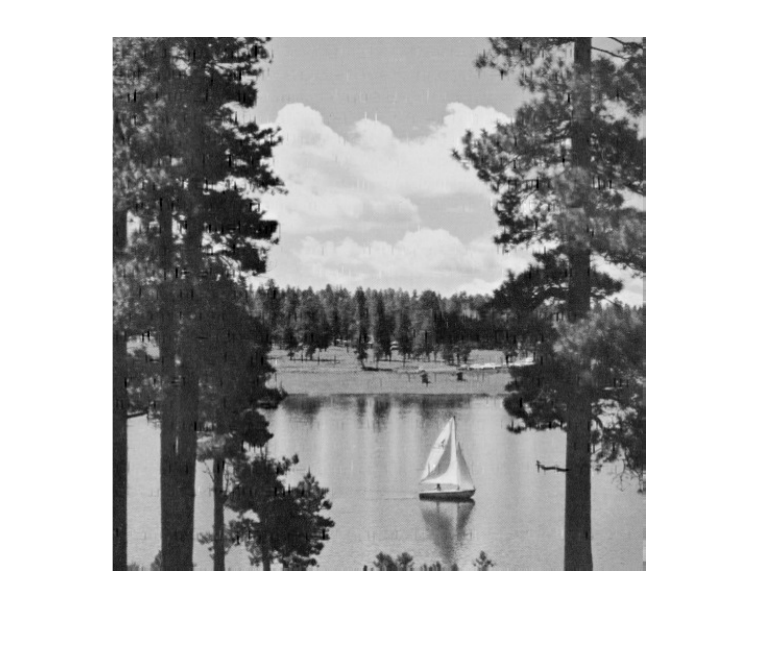}&\includegraphics[width = 0.4\linewidth,height=0.33\linewidth]{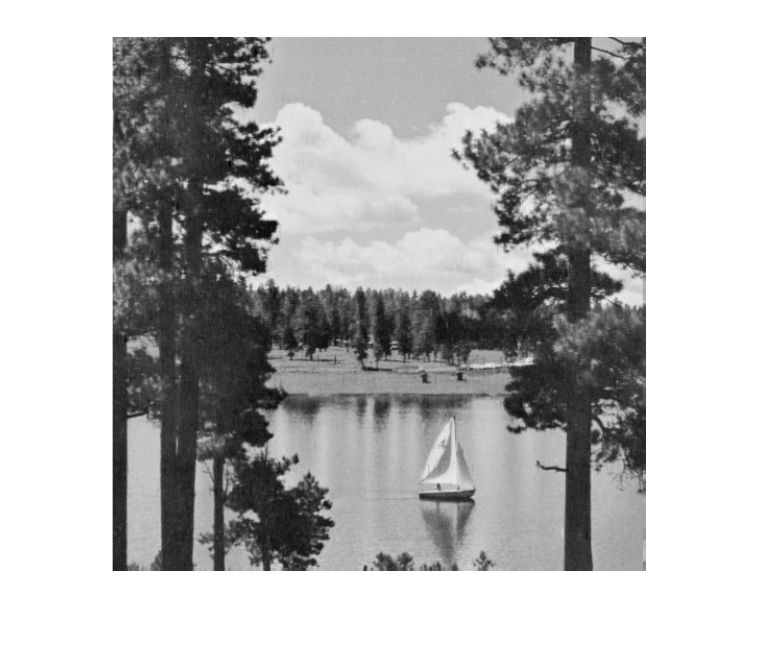}\\
        \hspace{0.02cm}\includegraphics[width = 0.28\linewidth,height=0.25\linewidth]{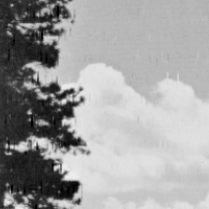}&\hspace{-0.02cm}\includegraphics[width = 0.28\linewidth,height=0.25\linewidth]{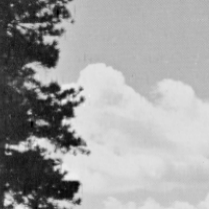}
    \end{tabular}
    \caption{Inpainting of 'lake' masked by the text mask, with dictionaries trained on 'living-room', $S = 5$, reconstruction sparsity is 16.
    Top: Masked 'living-room' and 'lake'. Middle Left: Reconstructed image using wKSVD dictionary , PSNR $= 30.8$dB. Middle Right: Reconstructed image using $M^*$ penalized dictionary, PSNR = $35.4$dB ($\mu = 10^9$). Bottom: Zoom on the upper left side of the image. One can see much more aberrations in the reconstructed image with wKSVD dictionary (Left) than with $M^*$ penalized (Right).}
    \label{fig:inp-text}
\end{figure}

Results for inpainting have been presented for $S$ between $4$ and $10$ with a regularization parameter $\mu = 10^8$. When using the same value of $\mu$ for smaller $S$ as $2$ and $3$ performances can be worse than with wKSVD, however decreasing the regularization parameter to $\mu = 10^6$ allows to retrieve or improve the reconstruction performance of wKSVD.

The new reconstructed image can be compared to the original using PSNR and SSIM measures. For instance Figure~\ref{fig:inpainting_error_curve} represents the curves of PSNR and SSIM versus reconstruction sparsity for the inpainting of the "lake" image trained on the image "living-room" and masked by the cracks mask with $S = 5$.

\begin{figure}[!ht]
    \centering
    \begin{tabular}{cc}
       \includegraphics[scale=0.35]{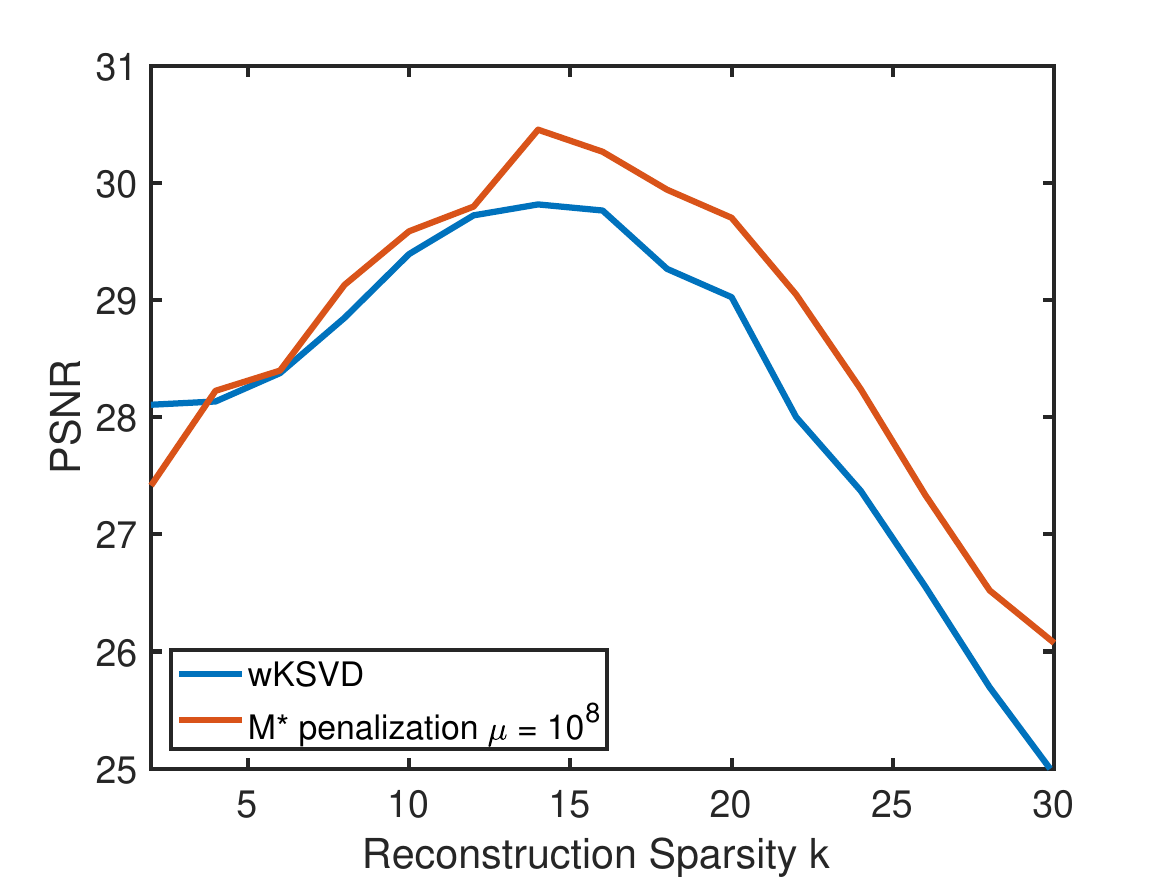} &\includegraphics[scale=0.35]{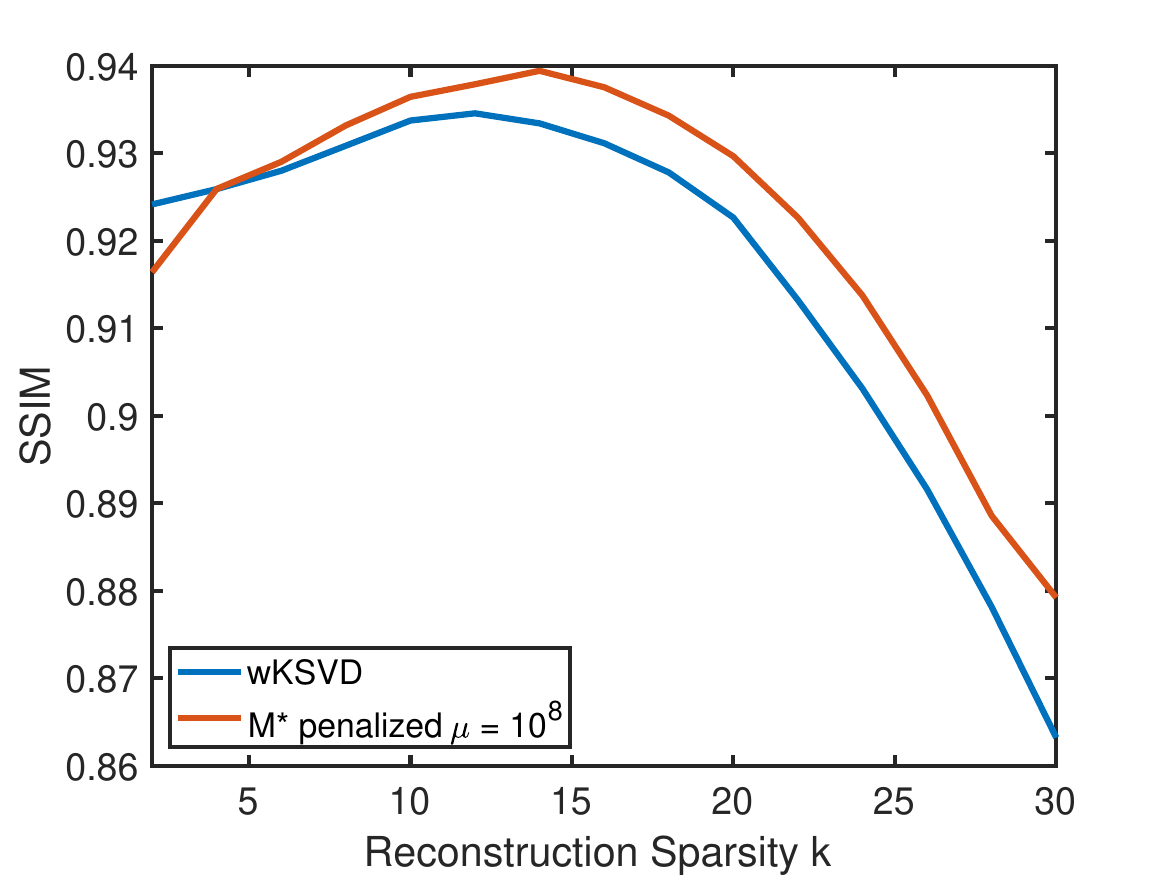} 
    \end{tabular}
    \caption{Plots show the reconstruction performances for the inpainting of 'lake' based on dictionaries learned on 'living-room' using wKSVD and $M^*$ regularized dictionary learning (taking $S = 5$). }
    \label{fig:inpainting_error_curve}
\end{figure}

The 14 images are used successively as training image with both masks and for all the sparsity levels $S$ between 4 and 10. This means $2\times14\times7$ dictionaries computed for each method. The dictionaries obtained by both methods are then used to reconstruct each of the 14 images with reconstruction sparsity $k$ between 2 and 30. This means forming (number of mask: $2 )\times $(number of train images: $14)\times$(number of train sparsity S: $7)\times$(number of test images: $14) = 2744$ PSNR and SSIM curves (as in Figure~\ref{fig:inpainting_error_curve}). Table~\ref{tab:times} in the next subsection indicates that the $M^*$ penalization doesn't involve more computational efforts, indeed the computations involved by the truncated SVDs in wKSVD are heavier than those of SGD on the nullspace in our penalized algorithm leading to a time reduction from $400$ to $200$ seconds for computing the dictionaries used to obtain Figure~\ref{fig:inpainting_error_curve}. 

For each plot, the area of the gap between the curve obtained from $M^*$ penalization and the curve from wKSVD gives an indicator of the benefit of regularized methods (the larger the better). This area can be computed as the mean of the difference between the curves. These areas are aggregated over the training sparsity $S$ and over the two masks for each couple of train/test images and the results in terms of PSNR are shown in Figure~\ref{fig:inpainting_heatmap} where the abscissa corresponds to the training images and the ordinate to the test ones.

\begin{figure}[!ht]
    \centering
    \includegraphics[width=0.55\linewidth]{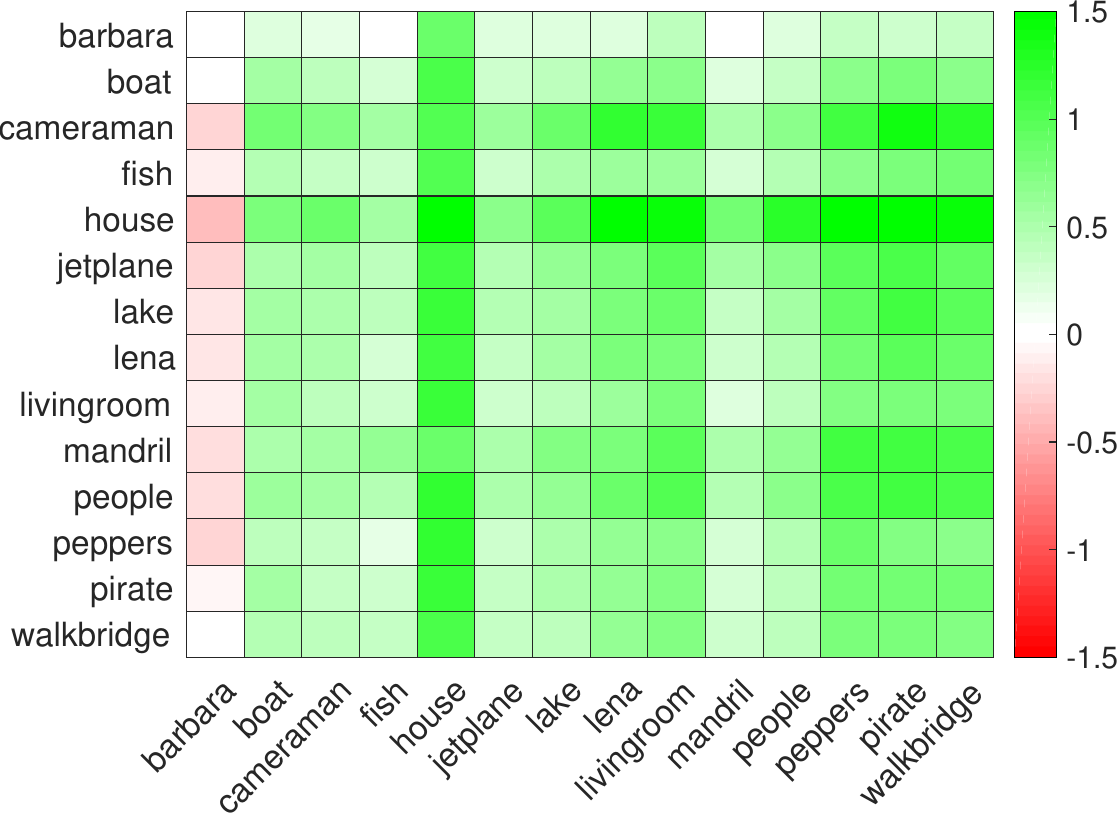}
    \caption{Heatmap of the average PSNR gap between the $M^*$ penalized method and wKSVD with training images on the x-axis and test images on the y-axis. Except for the Barbara image which has very particular texture, $M^*$ penalized algorithm performs consistently better.}
    \label{fig:inpainting_heatmap}
    \vspace{-0.2cm}
\end{figure}

Finally the distributions of the PSNR and SSIM gaps between the reconstructed images by the two methods are represented on Figure~\ref{fig:inp-gaps}. These distributions are shifted on the positive side showing globally better reconstruction performances with $M^*$ penalized method. One can also observe the distribution of the $M^*$ of the dictionaries produced by both methods in Figure~\ref{fig:inp-gaps} on the right. The x-axis corresponds to the difference of $M^*$ between the dictionaries and Gaussian matrices. Most of the dictionaries obtained by the penalized method have a nearly optimal $M^*$.

\begin{figure}[!ht]
    \centering
    \begin{tabular}{ccc}
    \hspace{-0.25cm}\includegraphics[width=0.32\linewidth,height=0.29\linewidth]{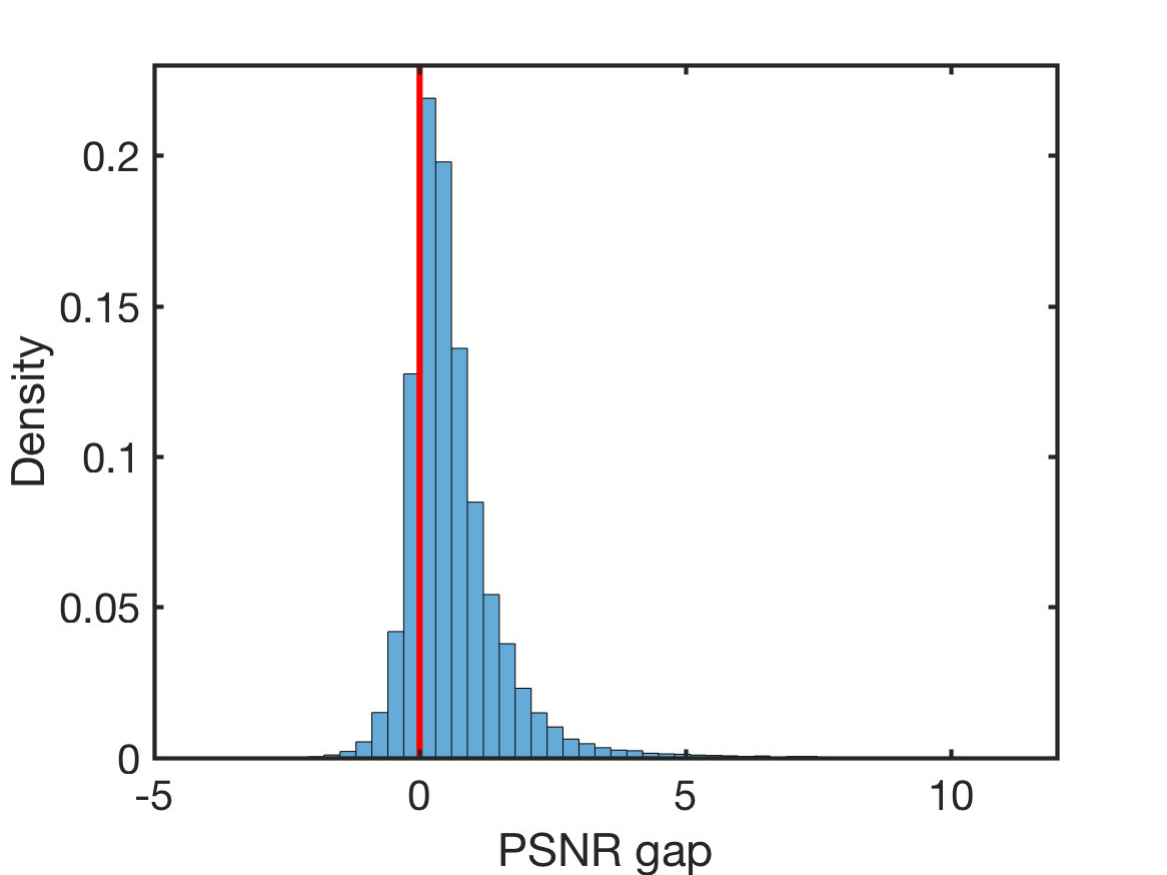} &
        \includegraphics[width=0.32\linewidth,height=0.29\linewidth]{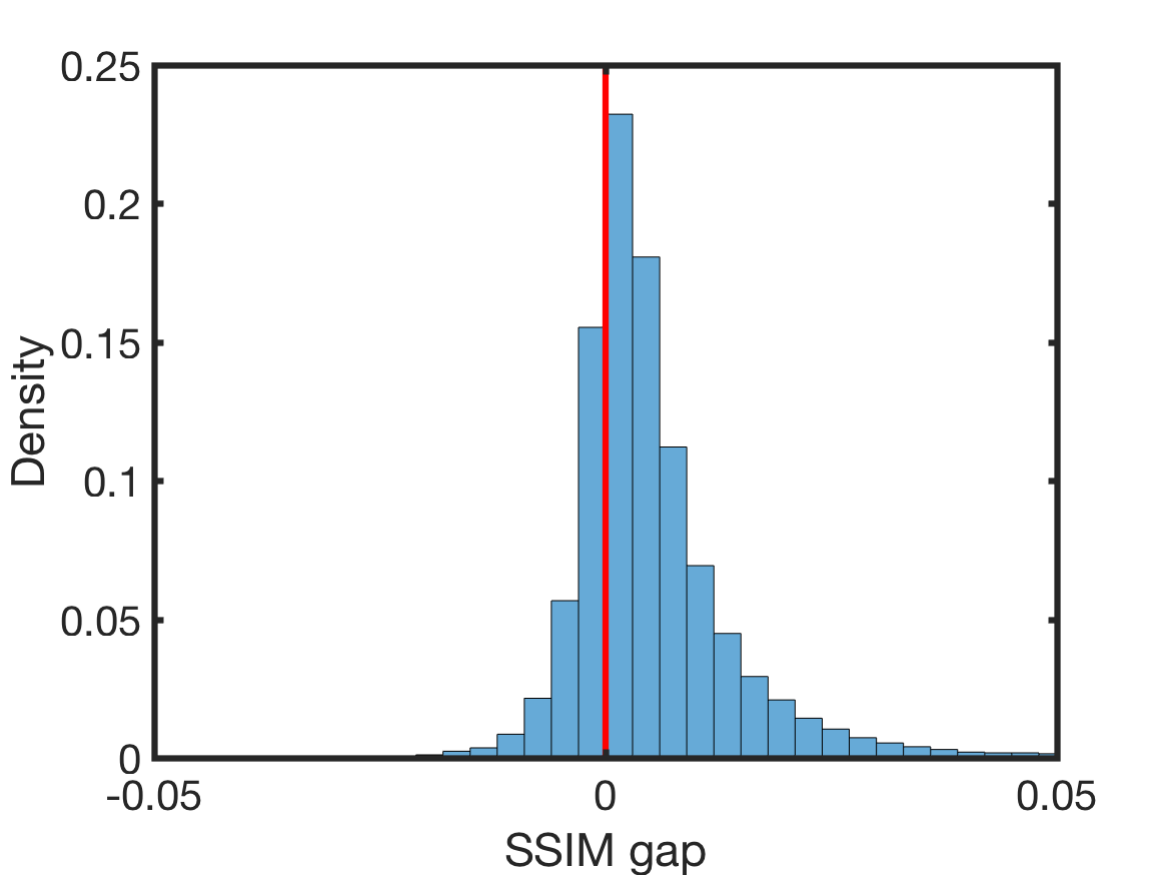} & \includegraphics[width=0.32\linewidth,height=0.29\linewidth]{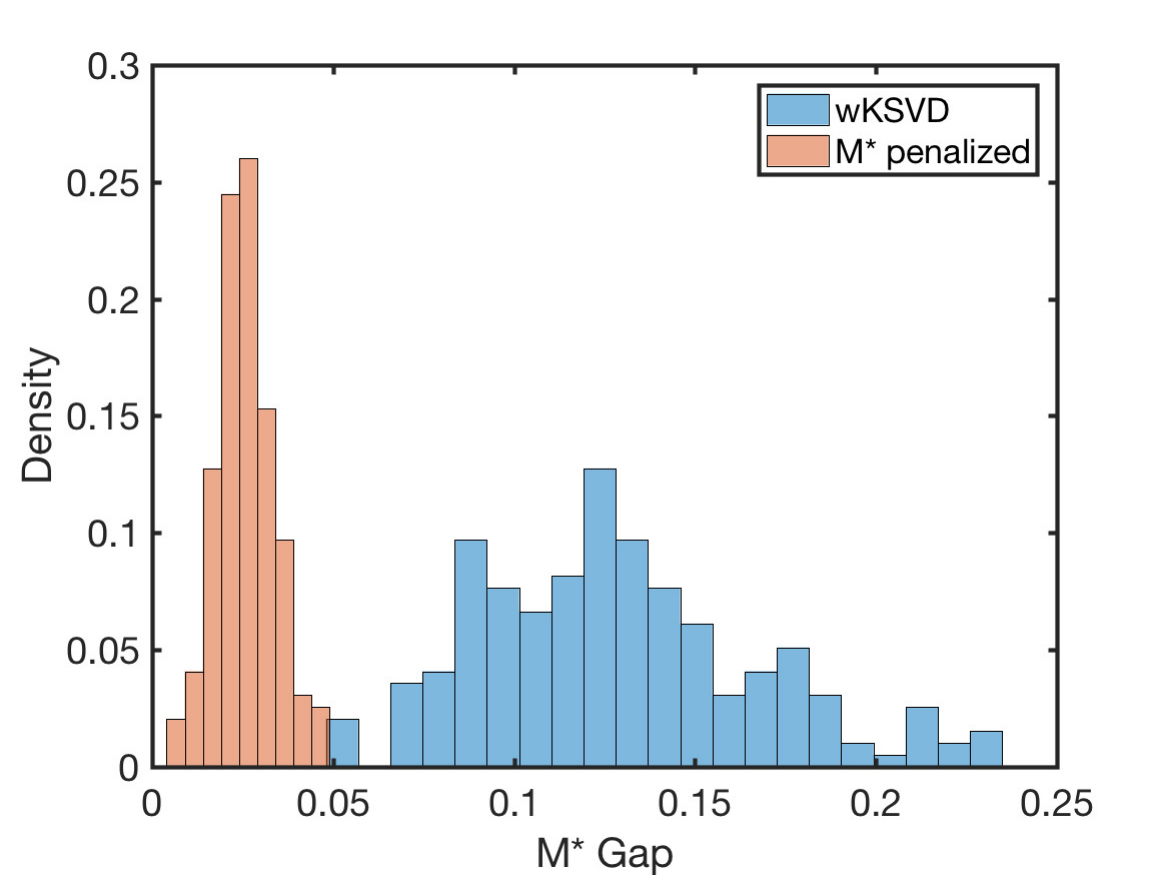}
    \end{tabular}
    \caption{Results of the $2\times7\times14\times14$ inpainting experiments. Left: Histogram of the gaps ($M^*$ regularized minus wKSVD) in SSIM of reconstructed images between both methods. Right: Histogram of the gap between $M^*$ of learned dictionaries and Gaussian $M^*$}
    \label{fig:inp-gaps}
\end{figure}
Here the training masked patches should not be fitted exactly. Indeed it would mean that the dictionary learned the noise and the reconstructed image would display artifact from the mask. In this setting the dictionary learned must have good generalization performance in order to fill the void in the images. The regularization parameter $\mu$ has been set to a constant value for all the inpainting experiments. It could of course be fine tuned to improve reconstruction results. 

\subsection{Comparaison to INK-SVD}

We also compare our results to the INK-SVD strategy \citep{Mail12}, which consists in adding a decorrelation step on the column of the dictionary after each KSVD (resp. wKSVD) update.

We recall that the coherence of a dictionary $D$ is defined as
\begin{equation}\label{eq:coherence}
\mu(A) := \underset{i \not = j}{\max}|\langle \frac{A_i}{\|A_i\|_2},\frac{A_j}{\|A_j\|_2} \rangle|
\end{equation}
The coherence gives a lower bound on the recovery threshold of a given matrix $A$. For instance \citep{Trop07} gives the following bound 
\begin{equation}\label{eq:bound-coh}
k(A) \geq \frac{1}{2}(1+\frac{1}{\mu(A)})
\end{equation}
However as mention in Section \ref{s:intro}, given a sensing matrix $A$ with sparse recovery threshold $k(A)$, coherence only allows to certify recovery of signals of cardinals lower than $O(\sqrt{k(A)})$.

INK-SVD is an algorithm from \citep{Mail12} which involves decorrelation of the columns of a dictionary in order to set an upper bound on its coherence. It can be seen as a coherence based regularization of dictionary learning. We used it in inpainting experiments by adding decorrelation steps after each dictionary update in wKSVD. Comparison between wKSVD, INK-SVD and $M^*$ penalization are presented in Figure~\ref{fig:inp-gaps-corr}. 
\begin{table}[!ht]
\vspace{-0.3cm}
\caption{Computational times taken by the three algorithms to compute the dictionaries learned on 'living-room' with training sparsity $S=5$.}
\label{tab:times}
\vskip 0.15in
\begin{center}
\begin{small}
\begin{sc}
\begin{tabular}{lccc}

& \lowercase{w}ksvd & ink-svd & $M^*$-reg \\

time(s) & 444 & 495 & 196  \\
\hline
\end{tabular}
\end{sc}
\end{small}
\end{center}
\vskip -0.1in
\end{table}
Finally Table~\ref{tab:times} displays the times of computing dictionaries with the different algorithms. We observe that $M^*$ penalization is more than two times faster, as it does not involves truncated SVDs. INK-SVD is also computationally heavier since it adds decorrelation steps to classical wKSVD.

In our experiment we used INK-SVD with the coherence of the dictionaries bounded by $0.8$. This value appeared to give the best results in the general case. When comparing $M^*$ regularization with penalization $\mu = 10^8$ against INK-SVD, results were balanced with slightly better performances from our $M^*$ penalized algorithm. When using a stronger regularization $\mu = 10^9$, Figures~\ref{fig:inp-gaps-corr} shows that $M^*$ regularization globally outperforms INK-SVD. 
In addition, it can be observed in Figure~\ref{fig:nus-vs-corr} that dictionaries obtained by INK-SVD have very bad reconstruction performances when only using a small amount of patches to reconstruct the images, whereas the $M^*$ is often consistently good with small and larger reconstruction sparsity. We can also note that inpainting results appear to be robust in the choice of the regularization parameter $\mu$ in the $M^*$ penalized algorithm whereas performances change drastically with the coherence bound in INK-SVD.

\begin{figure}[!ht]
\begin{minipage}[c]{0.45\linewidth}
\centering
    \begin{tabular}{c}
    \includegraphics[width=0.88\linewidth,height=0.75\linewidth]{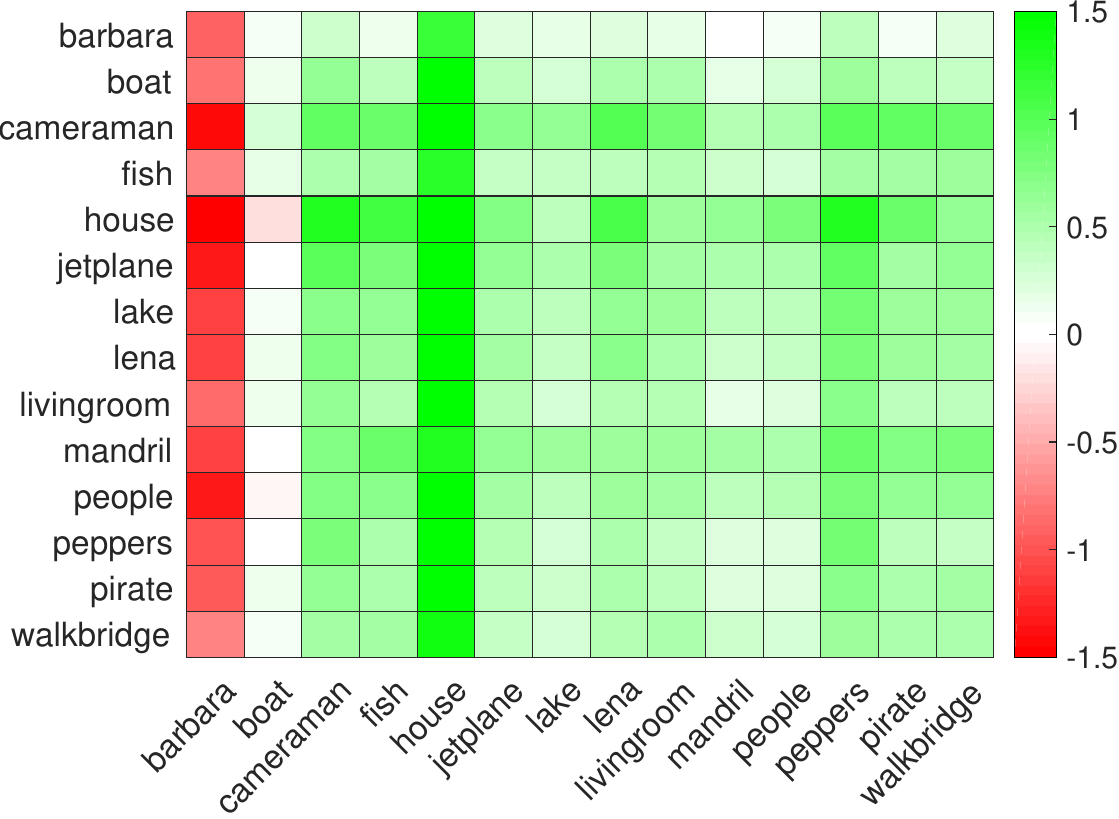}\\
    \hspace{0.5cm}\includegraphics[width=0.88\linewidth,height=0.75\linewidth]{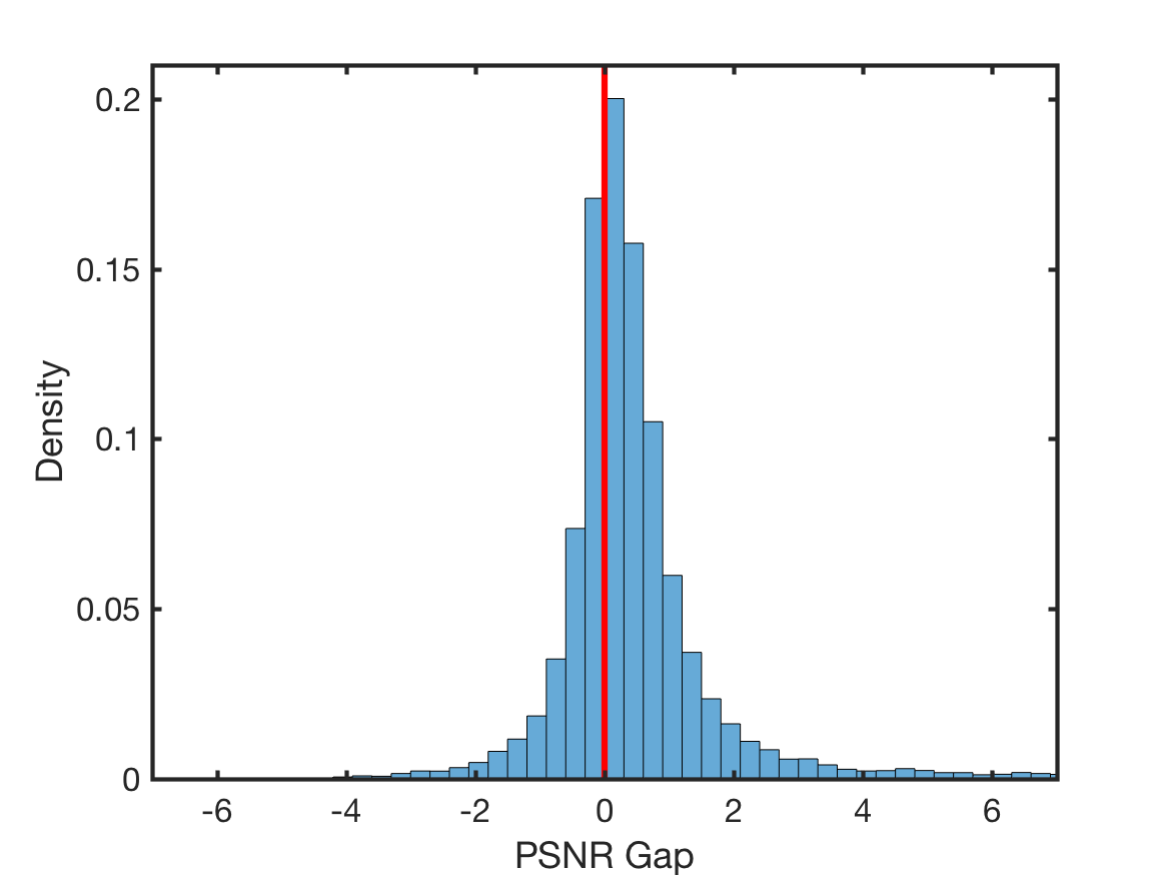} \\
    \hspace{0.5cm}\includegraphics[width=0.88\linewidth,height=0.75\linewidth]{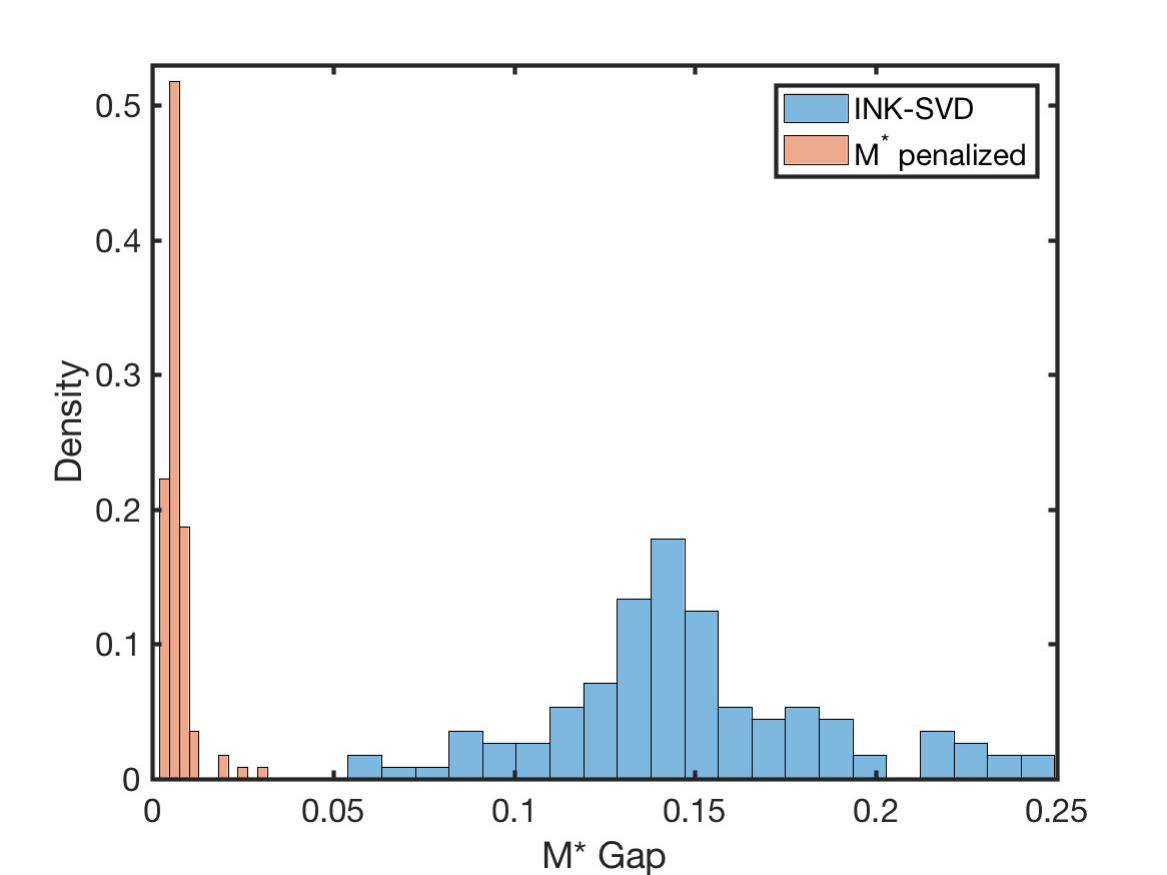}
    \end{tabular}
    \caption{Top: Heatmap of the average PSNR gap between the $M^*$ penalized method and INK-SVD with training images on the x-axis and test images on the y-axis. Middle: Histogram of the gaps ($M^*$ minus INK-SVD) in PSNR of reconstructed images between both methods. Bottom: Histogram of the gap between $M^*$ of learned dictionaries and Gaussian $M^*$.
    ($\mu = 10^9$ for $M^*$ penalization and coherence bounded by $0.8$ in INK-SVD)}
    \label{fig:inp-gaps-corr}
\end{minipage}
\hfill
\begin{minipage}[c]{0.45\linewidth}
    \centering

    \begin{tabular}{c}
        \includegraphics[width=0.88\linewidth,height=0.8\linewidth]{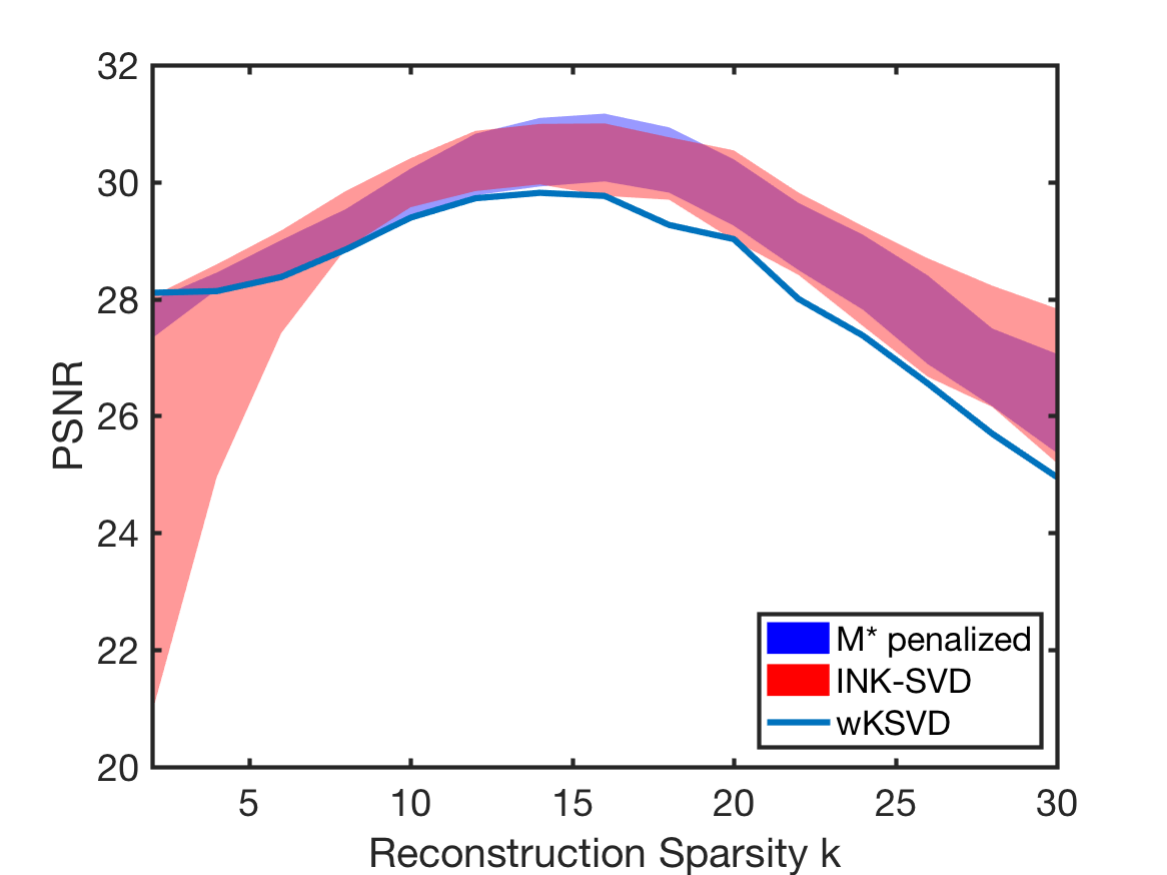}\\
        \includegraphics[width=0.88\linewidth,height=0.8\linewidth]{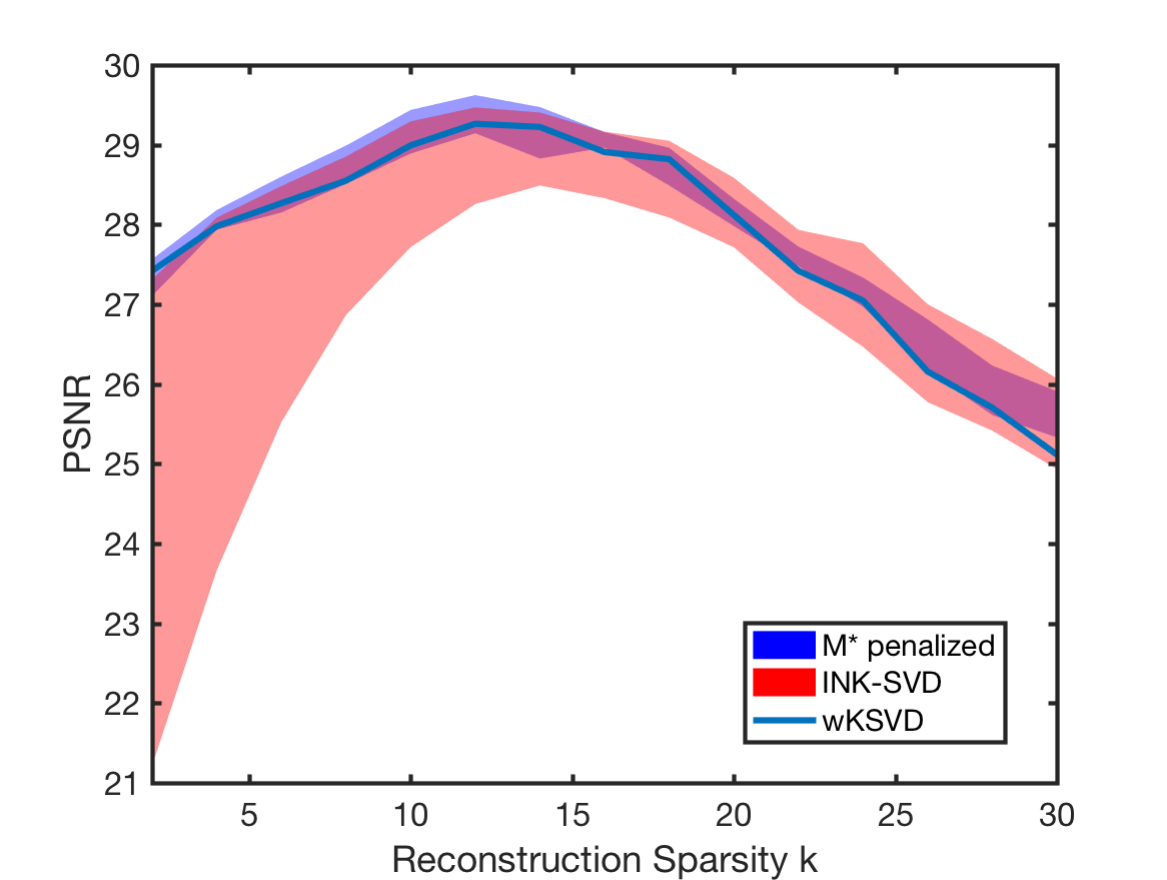}
    \end{tabular}
    \vspace{5.4cm}
    \caption{Reconstruction PSNR of the different methods versus the reconstruction sparsity. Shaded regions are delimited by the best and worst reconstruction PSNR for each algorithm when varying their regularization parameter ($\mu$ from $10^5$ to $10^{11}$ f.3or $M^*$ penalization and coherence between $0.6$ and $1$ for INK-SVD). Left: Trained on 'living-room' test on 'lake', $S = 5$ . Right: Trained on 'jetplane' test on 'fish', $S=6$.)   }
    \label{fig:nus-vs-corr}
\end{minipage}%
\end{figure}

In the next section we use the $M^*$ proxy to design sampling sets for MRI reconstruction.

\section{Using the M* proxy to design sampling matrices in MRI.}\label{s:mri}
Compressed sensing has direct applications in MRI \citep{Lust08}. MRI measurements are both costly and time consuming and compressed sensing results allow reconstruction of MRI images using fewer measurements compared to classical signal processing theory. However, MRI hardware imposes specific constraints on sampling which make random sampling patterns highly impractical, and this part is dedicated to the design of efficient sampling patterns for MRI minimizing the sampling matrix's $M^*$.

\subsection{MRI Setting}\label{ss:mri-setting}
We consider a stylized MRI reconstruction problem on 2D images, and follow the setting introduced by \citep[Section 2]{Boye16}. The goal is to reconstruct an image $u : \mathbb{R}^2 \xrightarrow{} \mathbb{C}$, from observations of its 2D Fourier Transform $\hat{u}$ at a finite number of points. This means that only the set $\{\hat{u}(s_i) | s_i \in \mathcal{S}\}$ is known, where $\mathcal{S}$ is the finite set of acquisition points. Classical compressed sensing theory would suggest to draw the elements of $\mathcal{S}$ at random with a certain probability distribution depending on the sensing matrix. However due to physical constraints in the acquisition process, $\mathcal{S}$ cannot be arbitrary. The acquisition points should lay on a continuous admissible path on the plane. 
\clearpage
More precisely, given an acquisition duration $T$ and a sampling period $\Delta t$, $\mathcal{S}$ should be of the form $\{s(i\Delta t), i \in [|1,\lfloor{T}/{\Delta t}\rfloor|] \}$ with the path $s$ in 
\[
\mathcal{S}_{T,A,b,\alpha,\beta} = \{s \in \mathcal{C}^2([0,T],\mathbb{R}^2) \;|\; ||\dot{s}||_{\infty} \leq \alpha , ||\ddot{s}||_{\infty} \leq \beta, A(s) = b\}
\]
where $\alpha$ and $\beta$ are constants imposed by the acquisition process and $A(s) = b$ form additional affine constrains.

In the following, $u$ is a discrete image in $\mathbb{C}^{n \times n}$. We will constantly identify the matrix $X \in \mathbb{C}^{n\times n}$ with the vector $\vect({X}) \in \mathbb{C}^{n^2}$, in particular if $A \in \mathbb{C}^{n^2\times n^2}$ refers to a linear 2D transform, we will consider $AX$ as the matrix formed as $A\vect(X)$. Once $\mathcal{S}$ is determined, there exists a linear mapping $L_{\mathcal{S}}$ from $\mathbb{C}^{n \times n}$ to $\mathbb{C}^{|\mathcal{S}|}$, such that $L_{\mathcal{S}}u = \hat{u}_\mathcal{S}$ where $\hat{u}_\mathcal{S}$ is the vector constituted with Fourier transform of $u$ evaluated at the elements of $\mathcal{S}$.
The problem of reconstructing $u$ is 
\BEQ\label{eq:MRI-rec-noprior}
\BA{ll}
\mbox{find} & x\\
\mbox{subject to} & L_{\mathcal{S}}x=\hat{u}_{\mathcal{S}},
\EA
\EEQ
which has no chance to find $u$ without prior information. However $u$ being a natural image, we can assume $u$ is sparse, or at least compressible in a wavelet basis, i.e. writing $H$ the 2D Discrete Wavelet Transform, one can assume that $Hu$ is compressible. This comes down to the classical compressed sensing formulation, where the recovery problem is written
\BEQ\label{eq:MRI-rec-prior}
\BA{ll}
\mbox{minimize} & \|Hx\|_1\\
\mbox{subject to} & L_\mathcal{S}x=\hat{u}_{\mathcal{S}},
\EA
\EEQ
with respect to $x \in \mathbb{C}^{n^2}$. To retrieve the form of \eqref{eq:l1-dec} the problem can be rewritten 
\BEQ\label{eq:MRI-rec-prior-}
\BA{ll}
\mbox{minimize} & \|x\|_1\\
\mbox{subject to} & L_{\mathcal{S}}H^Tx=\hat{u}_{\mathcal{S}},
\EA
\EEQ
with respect to $x \in \mathbb{C}^{n \times n}$. In the context of MRI the image $u$ is in fact a real signal, which means the minimization is performed on $\reals^{n \times n}$. In addition, the theory in Section~\ref{s:dico} is presented for real dictionaries, so in the following we let
\[
A_{\mathcal{S}}=
  \begin{bmatrix}
    \text{Real}(L_{\mathcal{S}}H^T)\\
    \text{Imag}(L_{\mathcal{S}}H^T)
  \end{bmatrix}.
\]
The cardinal of $\mathcal{S}$ is related to the acquisition time. Given an acquisition time limit that translates into an integer $m$, the goal is to find an admissible acquisition set $\mathcal{S}$ of cardinal $m$ that best recovers the initial image~$u$, solving
\BEQ\label{eq:MRI-rec}
\BA{ll}
\mbox{minimize} & \|x\|_1\\
\mbox{subject to} & A_{\mathcal{S}}x=\begin{bmatrix}
  \text{Real}(\hat{u}_{\mathcal{S}})\\
  \text{Imag}(\hat{u}_{\mathcal{S}})
\end{bmatrix},
\EA
\EEQ
with respect to $x \in \reals^{n \times n}$.
When the elements of $\mathcal{S}$ are taken on a regular Cartesian grid, here $[1,n]^2$, the operator $L_\mathcal{S}$ is a simple 2D Discrete Fourier Transform on computation can be performed efficiently. Writing $F_2 \in \mathbb{C}^{n^2\times n^2}$ the 2D Discrete Fourier Transform matrix, $L_{\mathcal{S}} = F_{2,\mathcal{S}}$ formed using the rows of $F_2$ with index  $i+n(j-1)$ for $(i,j)\in\mathcal{S}$. In the following we identify $[1,n]^2$ and $[1,n^2]$.
Experiments will be presented in this context of Cartesian sampling but can be extended to noncartesian sampling using Nonuniform Fourier Transforms.

\subsection{Selection of the sampling set}
In view of Section~\ref{s:dico}, one can use the quantity $M^*(A_\mathcal{S})$, for a $\mathcal{S}\subset [1,n^2]$, as a proxy for the sparse recovery threshold of the sampling matrix $A_\mathcal{S}$. Thus we will use the $M^*$ to distinguish the sparse recovery power through \eqref{eq:MRI-rec} of different sampling sets $\mathcal{S}$.

In order to deal with tractable problems and to take into account the acquisition constraints mentioned previously we choose to impose some restrictions on sampling sets. It is common in the MRI community to sample the Fourier coefficients along spirals (e.g \citep{Boye16}). Sampling along spirals allows to get a high concentration of sampling points in the center of the image in Fourier space, which correspond to coefficients of low energy. It appears crucial to sample enough low energy Fourier coefficients to obtain well reconstructed images (similar to classical Fourier sampling). 

We consider the problem of choosing $N$ spirals among a set $\mathcal{S}_p$ of $N_s$ predefined spirals. Each spiral in $\mathcal{S}_p$ is obtained as $\left\{\left(\lfloor n\rho\cos(n\theta)\rfloor,\lfloor n\rho\sin(n\theta)\rfloor\right)+c, n\in[0,n_s-1]\right\}$ with $\rho>0,\theta\in\reals,c\in\N^2$ and $n_s\in\N$. We choose $c$ as the center of our 2D images in $\reals^{n\times n}$, $n_s\in \N^*$, and given two vectors $\theta \in \reals^{N_s}$, $\rho \in \reals_+^{N_s}$, we have
\BEQ
\mathcal{S}_p^{\theta,\rho,n_s,c} = \left\{ \left\{\left(\lfloor n\rho_i\cos(n\theta_i)\rfloor,\lfloor n\rho_i\sin(n\theta_i)\rfloor\right)+c, n\in[0,n_s-1]\right\}, i\in[1,N_s] \right\}
\EEQ

Our goal is then to select $N$ spirals among $\mathcal{S}_p^{\theta,\rho,n_s,c}$ to create a sampling set $\mathcal{S}$ with the better reconstruction performances. To do so we we search for the set $\mathcal{S}$ such that $M^*(A_\mathcal{S})$ is the lowest. Finding the minimum of $M^*(A_\mathcal{S})$ for $\mathcal{S}= \bigcup_{i=1}^N \mathcal{S}_i$ with $\mathcal{S}_i\subset\mathcal{S}_p^{\theta,\rho,n_s,c}$ requires $\begin{pmatrix}
N_s\\
N
\end{pmatrix} M^*$ computations which will be prohibitive. Thus we adopt a greedy strategy to select spirals with the lowest $M^*$. This strategy is describe in Algorithm~\ref{algo:greedy-m*} and requires $O(NN_s)$ $M^*$ computations.
\begin{algorithm}[!ht]
\caption{Greedy $M^*$}
\label{algo:greedy-m*}
\begin{algorithmic}
\STATE \algorithmicrequire\; $A$ linear operator on $\reals^{n^2}$, $\mathcal{S}_p^{\theta,\rho,n_s,c}=\bigcup_{i=1}^{N_s}\mathcal{S}_p^i$ a set of $N_s$ spirals in $\reals^{n\times n}$, $N\in\N^*$ the number of spirals to select. 
\STATE $\mathcal{S}_0 = \{\emptyset\}$.
\FOR{ $k$ from 1 to $N$}
\STATE $\mathcal{S}_k = \underset{\substack{l \in [1,N_s],\\ \mathcal{S} = \mathcal{S}_{k-1}\cup \mathcal{S}_p^{l}}}{\argmin }M^*(A_\mathcal{S}) $ where $A_\mathcal{S}$ is composed of the rows of $A$ with indexes $i+n(j-1)$ for $(i,j)\in\mathcal{S}$,
\ENDFOR
\STATE \algorithmicensure\; $S_{N}$.
\end{algorithmic}
\end{algorithm}

\subsection{Computing M* efficiently}\label{ss:m*-comp} In the dictionary learning applications, the size of the matrices involved was small and stochastic gradient descent steps were cheap because each step relied on solving only a small linear program. However in Algorithm~\ref{algo:greedy-m*}, adding a spirals to the current sampling set requires the computation of $N_s-i-1$ $M^*$ at iteration $i$. Recall from \eqref{eq:m*-fun} that $M^*$ is computed through a sampling strategy, involving the resolution of a substantial number of these linear programs (typically 500). For a matrix $A \in \reals^{m\times n}$ and $g \sim \mathcal{N}(0,I)$, the linear program to be sampled has the following form
\BEQ\label{eq:m*-eq_bis}
\BA{ll}
\mbox{minimize} & \|x+Fg\|_{\infty}\\
\mbox{subject to} &  F^Tx = 0\\
\EA
\EEQ
with respect to $x \in \reals^{n}$, writing $F$ basis of the nullspace of $A$, such that $F^TF = I$. The program contains $n+1$ variables and at least $3n-m$ constrains. Given that $F^Tx = 0 \iff x = A^Ty$ with $y\in\reals^{m}$ the problem becomes 
\BEQ\label{eq:m*-eq-less-var}
\BA{ll}
\mbox{minimize} & \|A^Tx+Fg\|_{\infty}\\
\EA
\EEQ
with respect to $x \in \reals^{m}$. This is a linear program with $m+1$ variables and $2n$ constrains which is a nice improvement since typically $m << n$. This problem is a $L_{\infty}$ norm linear regression, and is called discrete linear Chebyshev approximation in the literature. Several algorithms have been developed to solve these particular linear programs efficiently, with \citet{Cadz02} for example giving a simplex-like method, or \citet{Cole92} proposing an interior-point method that can get past the nondifferentiability plane through specific line-searches. Actually this task is done very efficiently for relatively small $n$ ans $m$ by modern interior-point solvers such as MOSEK but it becomes quickly computationally prohibitive for larger dimensions.

The problem under its current form starts becoming intractable on common machines when considering images of size $128\times128$  which is still very far from real MRI images, and this becomes a real issue when tackling small $256\times 256$ images, because the full sensing matrix does not fit in memory. However, the sensing operator is a composition of Fourier and Wavelets transforms that can be computed very efficiently, but classical LP solvers require a full dense matrix as input. We can fix this issue using solvers that only access this matrix as a linear operator. To do so, we implemented an operator version of the indirect Splitting Cone Algorithm introduced in \citep{ODon16}. SCS is a first order method to solve the linear conic problem
\BEQ\label{eq:scs-cone}
\BA{ll}
\mbox{minimize} & c^Tx\\
\mbox{subject to} &  Ax \preceq_{\mathcal{K}} b\\
\EA
\EEQ
with respect to $x \in \reals^n$, where $A \in \reals^{m \times n}, c \in \reals^n, b \in \reals^{m}$ and $\mathcal{K}$ is a non-empty convex cone. When $\mathcal{K}$ is simply the positive orthant $\reals^m_+$, this corresponds to a classical linear program. The algorithm relies on projection on $\mathcal{K}$ which is really easy in the case of the positive orthant and for projection on a linear subspace. These last projection comes down to solve linear systems involving the constrain matrix $A$. These systems can be solved iteratively using the conjugate gradient method which only performs matrix vector multiplication, allowing the use of abstract linear operators as constrains, hence never forming the full dense matrix $A$.

Problem \eqref{eq:m*-eq-less-var} can be reformulated to match the form of \eqref{eq:scs-cone} as 
\BEQ\label{eq:LP-m*}
\BA{ll}
\mbox{minimize} & t\\
\mbox{subject to} &  \begin{bmatrix}
  A & \begin{array}{c}
       -1 \\
       \vdots
  \end{array}\\
  -A & \begin{array}{c}
       \vdots\\
       -1
  \end{array}
\end{bmatrix} \begin{bmatrix}
  x\\
  t
\end{bmatrix} = \begin{bmatrix}
  -Fg\\
  Fg
\end{bmatrix}\\
\EA
\EEQ
where the minimization is performed with respect to the variables $x\in \reals^n$ and $t\in \reals$.

\subsection{Preliminary Numerical Results for Spirals Selection}\label{ss:num-mri}

Tests of Algorithm~\ref{algo:greedy-m*} are performed on operator $A\in\reals^{128^2\times128^2}$ which is the composition of the 2D Fourier operator and the 2D Haar wavelet operator. $\mathcal{S}_p^{\theta,\rho,n_s,c}$ is constituted of $N_s = 48$ discretized spirals containing $n_s = 300$ points. Some elements of $\mathcal{S}_p^{\theta,\rho,n_s,c}$ are displayed in Figure~\ref{fig:spirals}.
\begin{figure}[!ht]
    \centering
    \begin{tabular}{cc}
    \includegraphics[width=0.3\linewidth]{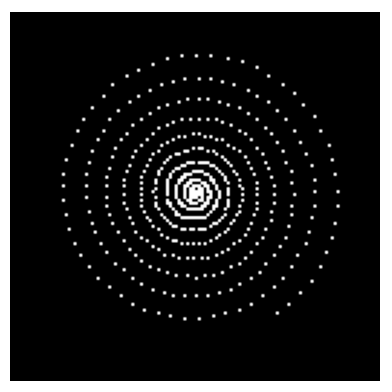} & \includegraphics[width=0.3\linewidth]{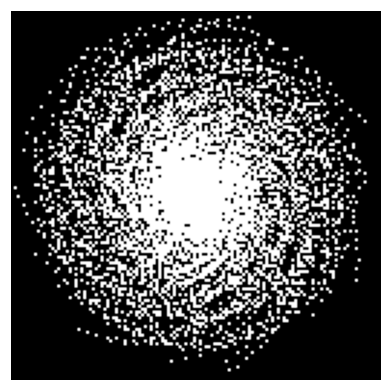} \\
    \end{tabular}
    
    \caption{Left : an element of $\mathcal{S}_p^{\theta,\rho,n_s,c}$. Right : the set $\mathcal{S}_*$ obtained by Algorithm~\ref{algo:greedy-m*} with the setting described in Section~\ref{ss:num-mri}. }
    \label{fig:spirals}
\end{figure}
We denote by $\mathcal{S}_*$ the output of Algorithm~\ref{algo:greedy-m*} after greedily choosing $N=16$ spirals in $\mathcal{S}_p^{\theta,\rho,n_s,c}$ according to their $M^*$ values. We compare the quality of the reconstruction of three images (displayed on Figure~\ref{fig:rec-spirals}) by solving \eqref{eq:MRI-rec} for $\mathcal{S}_*$, and sampling sets constituted of $N$ to $N+4$ spirals selected randomly in $\mathcal{S}_p^{\theta,\rho,n_s,c}$. Since \eqref{eq:MRI-rec} is a linear program we also solve it using the SCS solver from Section \ref{ss:m*-comp}. In addition, as the different spirals may contain common points, the sampling sets cardinals appear as appropriate measures of the sampling complexity and should be correlated with reconstruction quality. Another natural competitor to $\mathcal{S}_*$ would be $\mathcal{S}_{\text{max}}$ the set of $N$ spirals with maximal cardinals, however it is too difficult to compute and we consider instead the set of $N$ spirals obtained by greedily choosing spirals that make the cardinal the bigger (same as Algorithm~\ref{algo:greedy-m*} but using $\Card(\mathcal{S})$ instead of $M^*(A_\mathcal{S})$).
\begin{figure}[!ht]
    \centering
    \begin{tabular}{ccc}
    \includegraphics[width=0.26\linewidth]{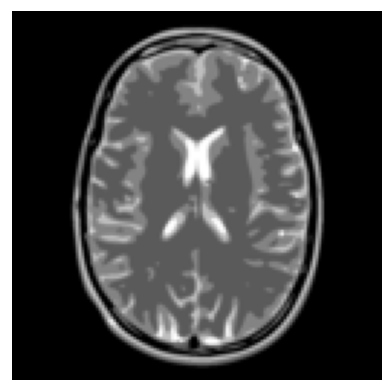}& \includegraphics[width=0.26\linewidth]{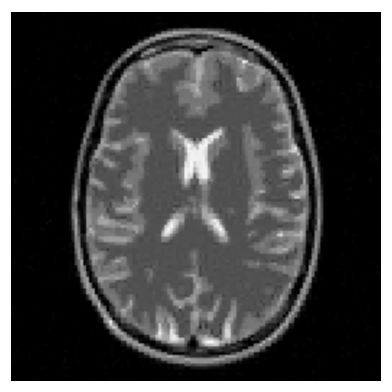} &\includegraphics[width=0.38\linewidth]{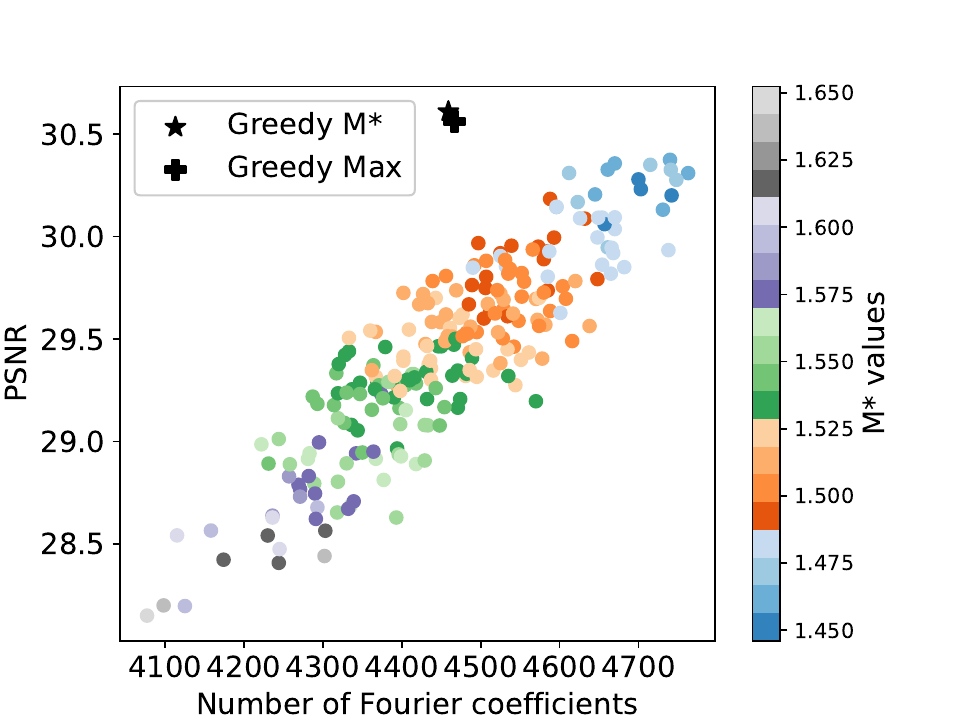} \\
    \includegraphics[width=0.26\linewidth]{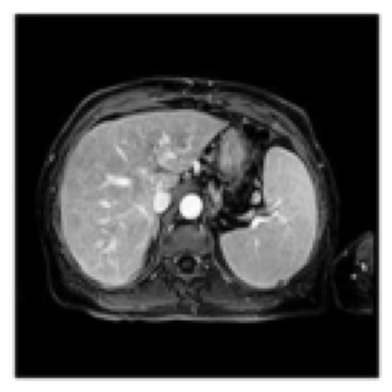}  & \includegraphics[width=0.26\linewidth]{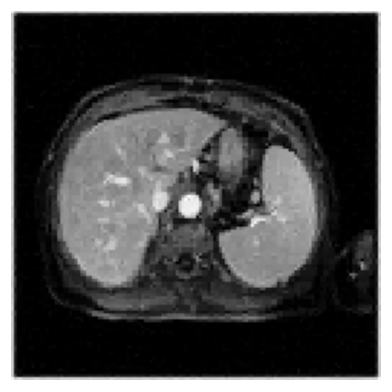} & \includegraphics[width=0.38\linewidth]{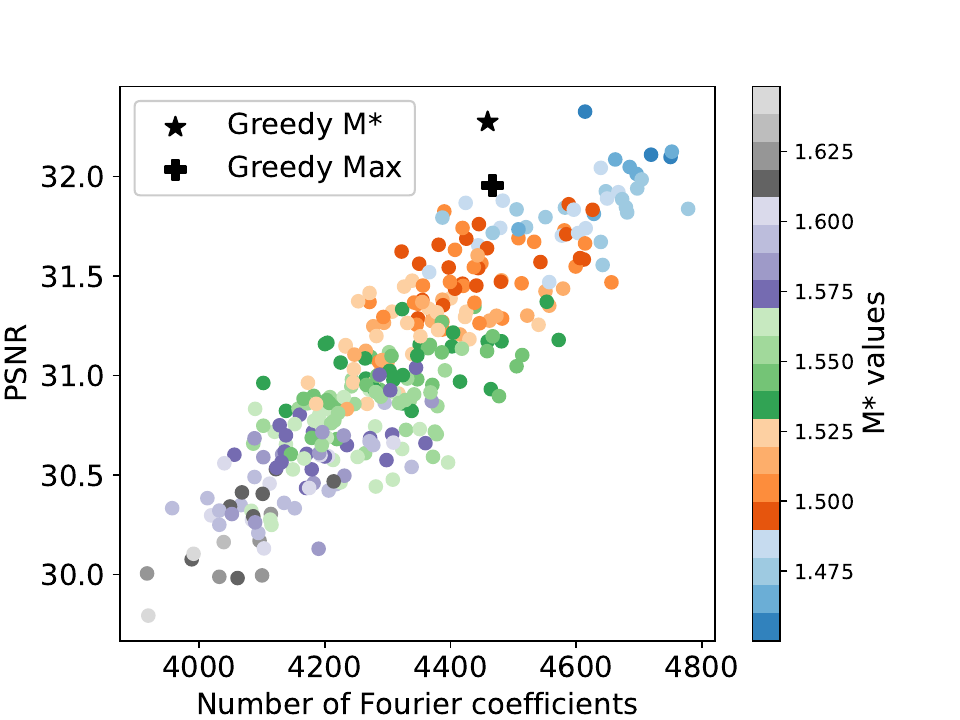} \\
    \includegraphics[width=0.26\linewidth]{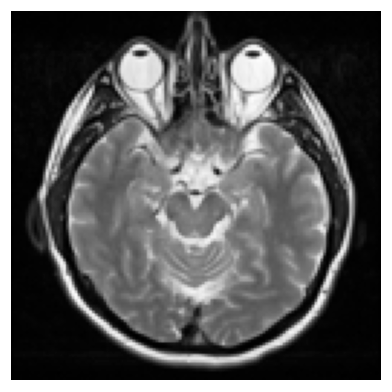}& \includegraphics[width=0.26\linewidth]{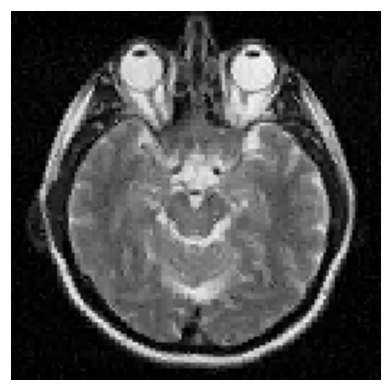}&\includegraphics[width=0.38\linewidth]{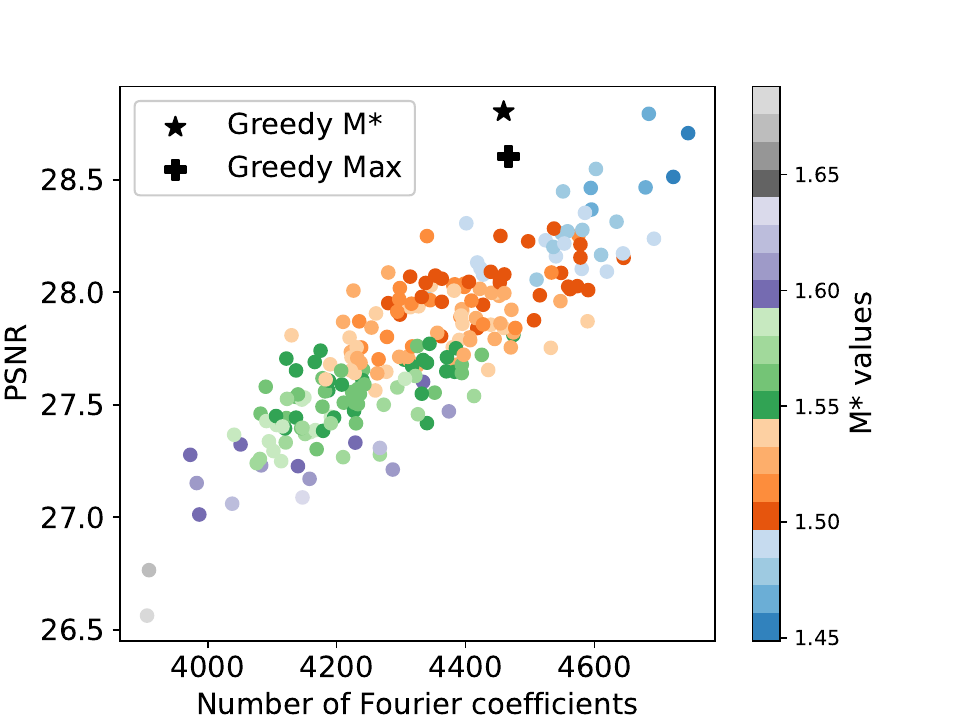} 
    \end{tabular}
    
    \caption{Reconstruction of MRI image through \eqref{eq:MRI-rec} for different sampling sets $\mathcal{S}$. Left : Original Images. Middle : Reconstructions of the left images obtained after solving \eqref{eq:MRI-rec} using our matrix free SCS implementation with precision $1.5\;10^{-3}$ with sampling set $\mathcal{S}_*$ detailed in Section~\ref{ss:num-mri}. Right : Each color dot corresponds to a reconstructed image obtained after solving \eqref{eq:MRI-rec} using our matrix free SCS implementation with precision $1.5\;10^{-3}$ with sampling set $\mathcal{S}$ constituted of spirals selected randomly in $\mathcal{S}_p^{\theta,\rho,n_s,c}$. Y-axis is the PSNR of the reconstructed images, x-axis is $\Card(\mathcal{S})$. The colors correspond to the value of $M^*(A_\mathcal{S})$. Greedy $M^*$ corresponds to $\mathcal{S}_*$ and Greedy Max to $\mathcal{S}_{\max}$ described in Section~\ref{ss:num-mri}. }
    \label{fig:rec-spirals}
\end{figure}

One can make several observations on Figure~\ref{fig:rec-spirals} (Right). First we observe that as we expect, the number of Fourier coefficients that is sampled is positively correlated with the quality of the reconstruction through \eqref{eq:MRI-rec}. Secondly we can notice that given a fixed number of Fourier coefficients in the sampling sets, reconstruction qualities appeared to be partially ordered by decreasing $M^*$ values of the sampling matrices. This follows the motivation of Section~\ref{s:dico},  {\em having a lower M* leads to better reconstruction properties}. Finally we observe that the sampling set $\mathcal{S}_*$ obtained with Algorithm~\ref{algo:greedy-m*} seems to lead to better reconstruction that other sampling sets with comparable numbers of Fourier coefficients.

\section{Conclusion and Future Work} We derived a tractable proxy for the sparse recovery threshold sensing matrices (that were called dictionary matrices or sampling matrices depending on the application). We aimed at exhibiting strong correlations between high values of sparse recovery threshold (that we enforced by maximizing our $M^*$ based lower bound) and high reconstruction (or generalization) power for sensing matrices. We did so in two applications, dictionary learning and compressed sensing MRI. We provide an algorithm to learn dictionaries with low $M^*$ values, and exhibit improved reconstruction quality in inpainting problems. Then we proposed an $M^*$ based greedy algorithm to design sampling sets for MRI acquisition and exhibits strong reconstruction power of such sets as well as clear link between $M^*$ values and reconstruction power.


\section*{Acknowledgements} 
A.A. is at the d\'epartement d'informatique de l'ENS, l'\'Ecole normale sup\'erieure, UMR CNRS 8548, PSL Research University, 75005 Paris, France, and INRIA. AA would like to acknowledge support from the {\em ML and Optimisation} joint research initiative with the {\em fonds AXA pour la recherche} and Kamet Ventures, a Google focused award, as well as funding by the French government under management of Agence Nationale de la Recherche as part of the "Investissements d'avenir" program, reference ANR-19-P3IA-0001 (PRAIRIE 3IA Institute).

\clearpage
\bibliography{mybiblio,MainPerso}
\bibliographystyle{plainnat}

\section{Supplementary Material}

 \subsection{Compression Experiments}\label{ss:comp-exp-sup}
 This is the setting of Section~\ref{sss:classic-dict}. The number of atoms in the dictionaries has been set to $p = 4 n = 256$. The training set is formed by $200p = 51200$ patches selected randomly in four training images. Both KSVD and penalized dictionary methods are applied for $150$ iterations, for different sparsity levels $S$ between $2$ and $10$.
 
  The dictionary obtained by $M^*$ penalization is not as sharp as the dictionary learned with KSVD, yet has a lot more structure than random Gaussian dictionaries (Figure~\ref{fig:dicos-comp}).

\begin{figure}[!ht]
\begin{minipage}[c]{0.4\linewidth}
\centering
\begin{tabular}{c}
        \hspace{-0.5cm}\includegraphics[scale=0.2]{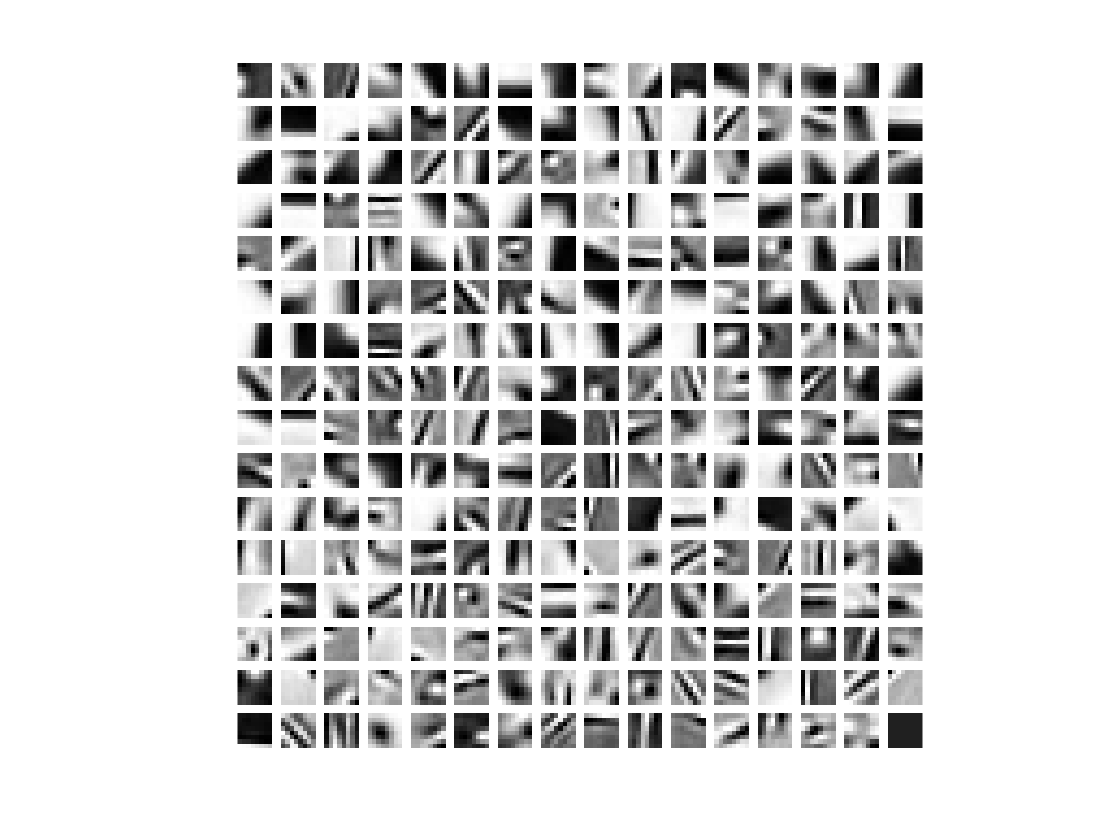}\\ KSVD \\
        \hspace{-0.5cm}\includegraphics[scale=0.2]{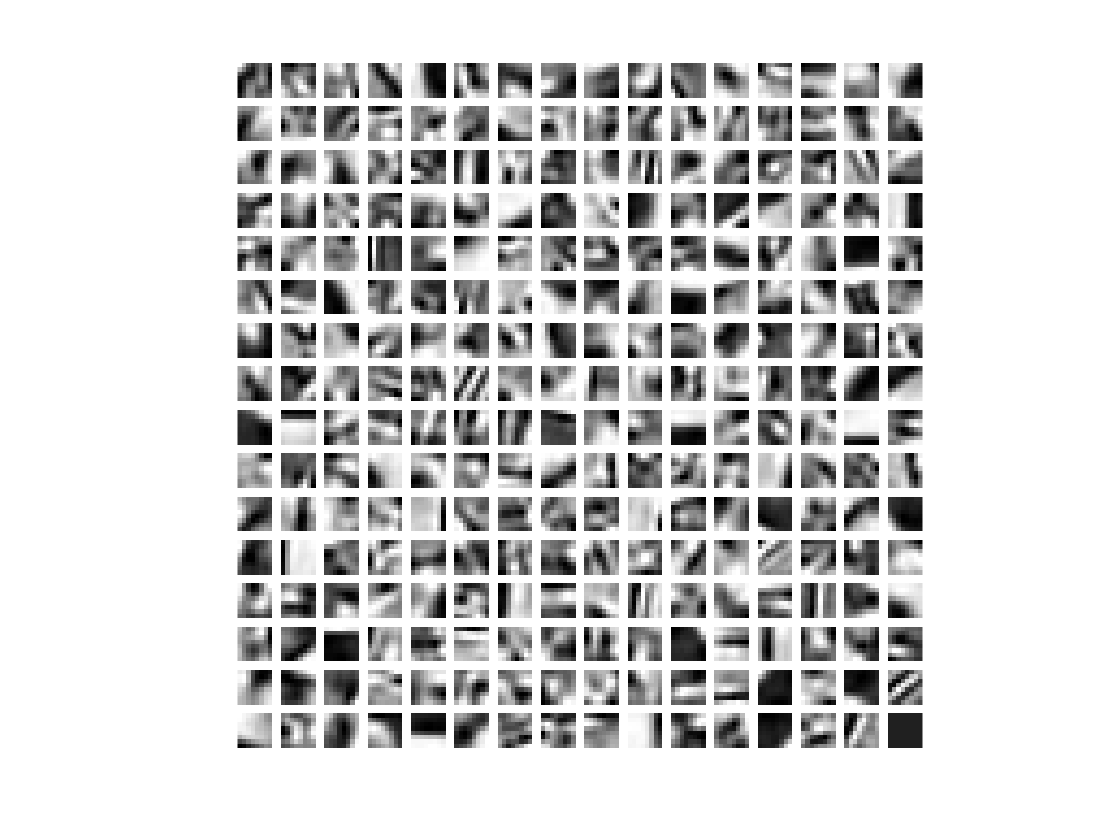}\\ $M^*$
    \end{tabular}
    \caption{Example of dictionaries learned by both KSVD and M*-Regularization, with a training sparsity $S = 5$ and a regularization parameter $\mu = 10^8$. (Gaussian $M^*$ for this dimensions is $1.517 \pm 0.003$). Left: KSVD, $M^* = 1.686 \pm  0.004$. Right: $M^*$ penalization, $M^* = 1.558 \pm 0.003$.}
    \label{fig:dicos-comp}
\end{minipage}
\hfill
\begin{minipage}[c]{0.4\linewidth}
\centering
\vspace{-1.2cm}
\begin{tabular}{c}
        \includegraphics[width=0.85\linewidth,height=0.85\linewidth]{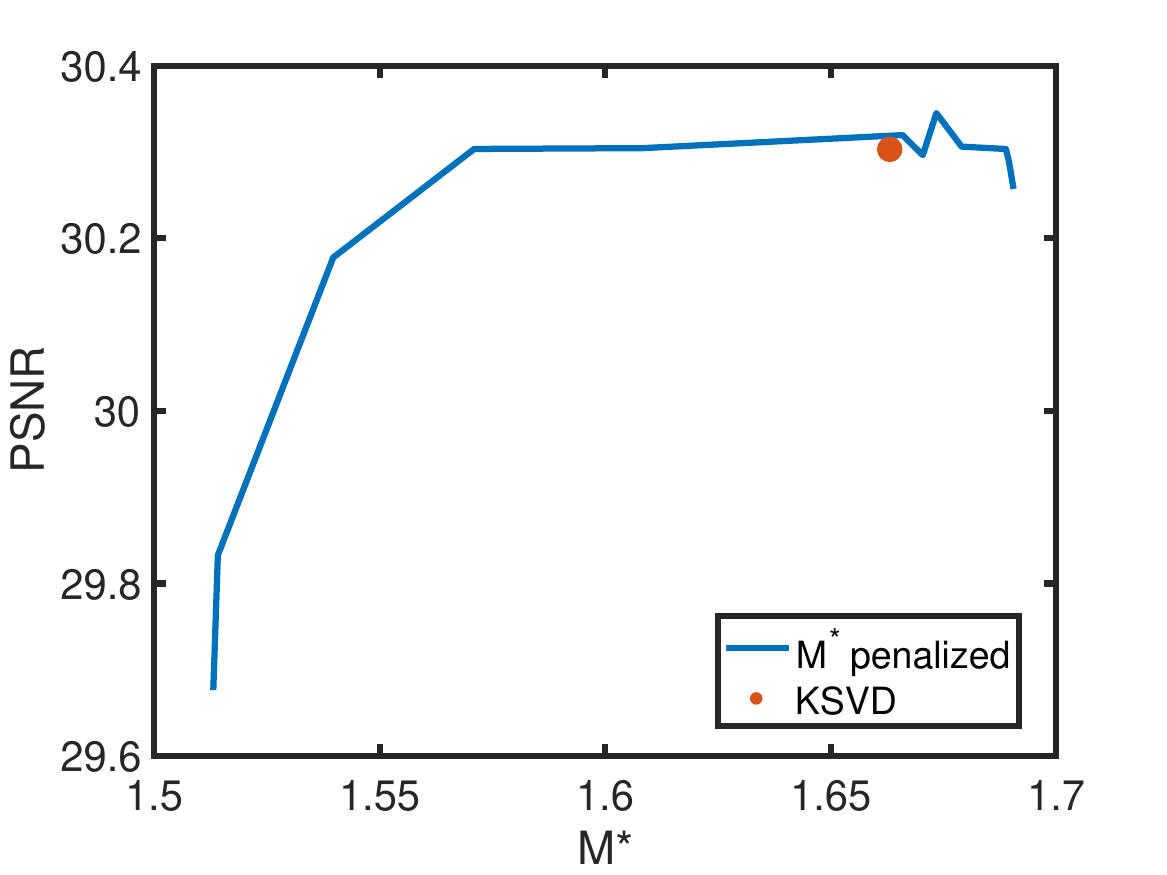}\vspace{0.5cm}\\
        \includegraphics[width=0.85\linewidth,height=0.85\linewidth]{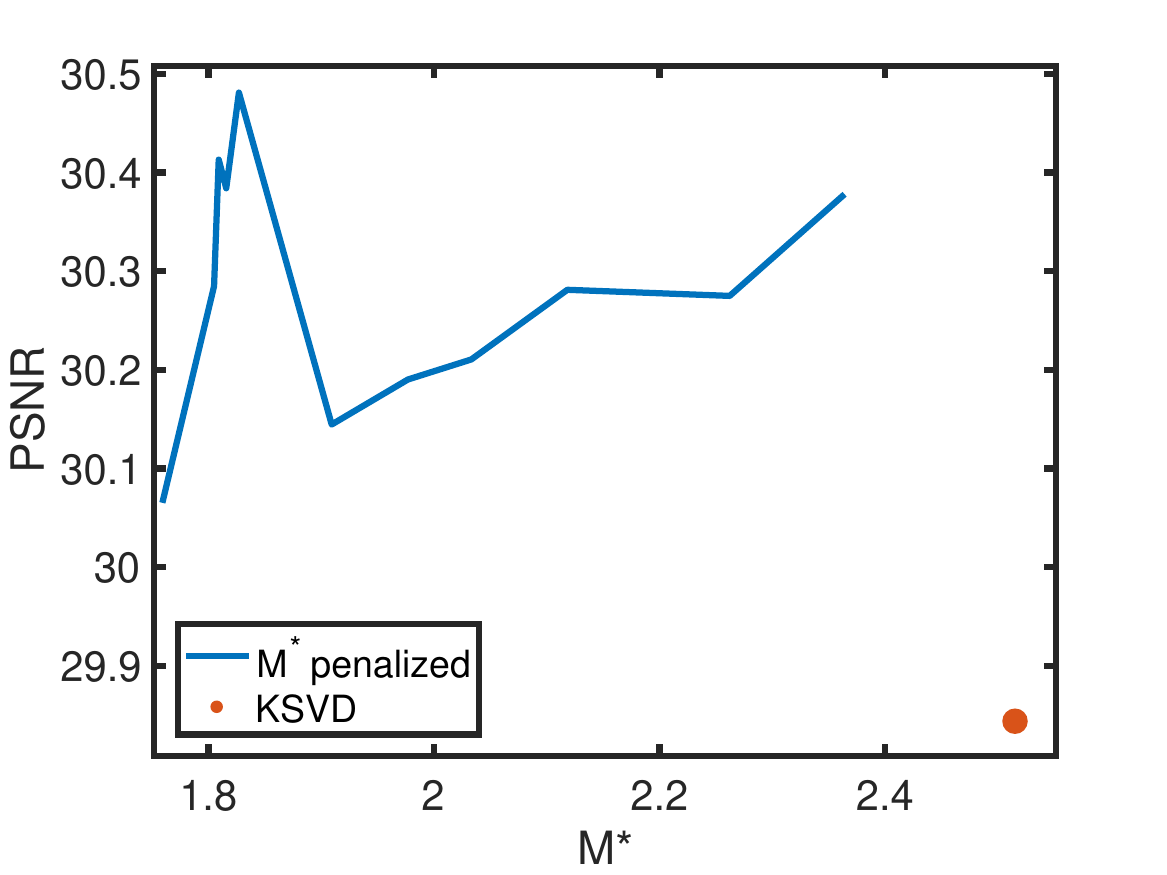}
    \end{tabular}
    \vspace{0.75cm}
    \caption{Training PSNR and $M^*$ Evolution with the regularization parameter $\mu$ with $S = 4$. Left: Gaussian initialization. Right: deterministic initialization. }
    \label{fig:deter-init}
\end{minipage}%
\end{figure}

To measure the quality of a dictionary $D$ in the compression setting, we formed a test set of 21 standard gray scale $512 \times 512$ images. Each image is decomposed in non overlaying patches. This means for instance that a $512\times512$ image is cut into $64\cdot64 = 4096$ adjacent patches of size $8 \times 8$. Each image is then represented as a set of patches $Y \in \reals^{64\times 4096}$ and is approximated by $DX$ where $X$ is obtained like \eqref{eq:rep-fun} with a given reconstruction sparsity $k$ (which is not necessarily the same as the training sparsity $S$). This corresponds to the compression factor: the smaller $k$, the more compressed the images are, so compression is measured by the cardinality of the representations of the images in the dictionary. 

For a given set of non overlaying patches $Y \in \reals^{64\times 4096}$ representing an image and a cardinality $k$, we define
\BEQ\label{eq:rep-fun}
\BA{rll}
Y_k(D) &\triangleq DX, & \\
X &= \mbox{argmin.} & ||Y-DX||^2_F\\
& \mbox{s.t.} & ||X_j||_0 \leq k,\quad\mbox{$j=1,\ldots,4096$}
\EA
\EEQ
where the minimization is performed with respect to $X\in\reals^{256\times4096}$. Here, $Y_k(D)$ corresponds to the matrix where each column is an approximation of the corresponding column of $Y$ using a linear combination of $k$ atoms of $D$.

\begin{figure*}[!ht]
    \centering
    \begin{tabular}{ccc}
        \includegraphics[scale=0.27]{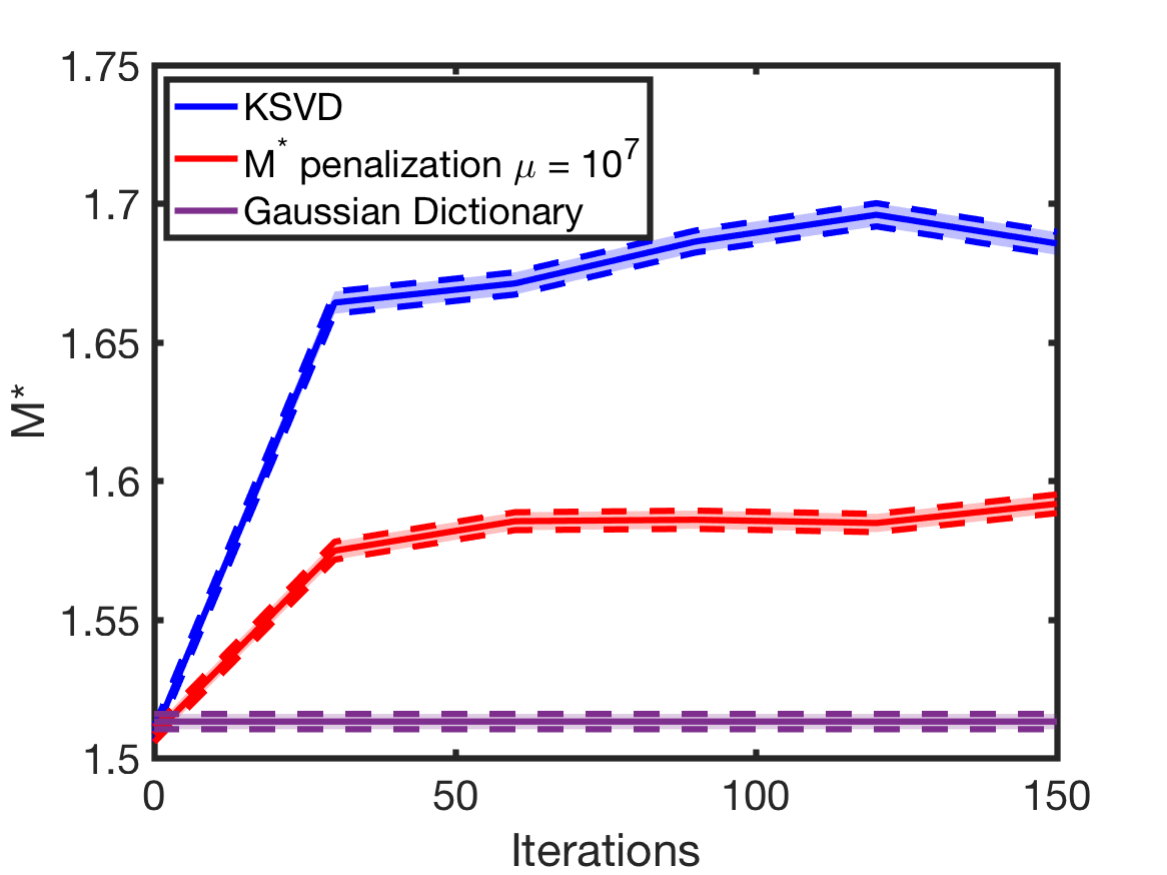}&
        \includegraphics[scale=0.27]{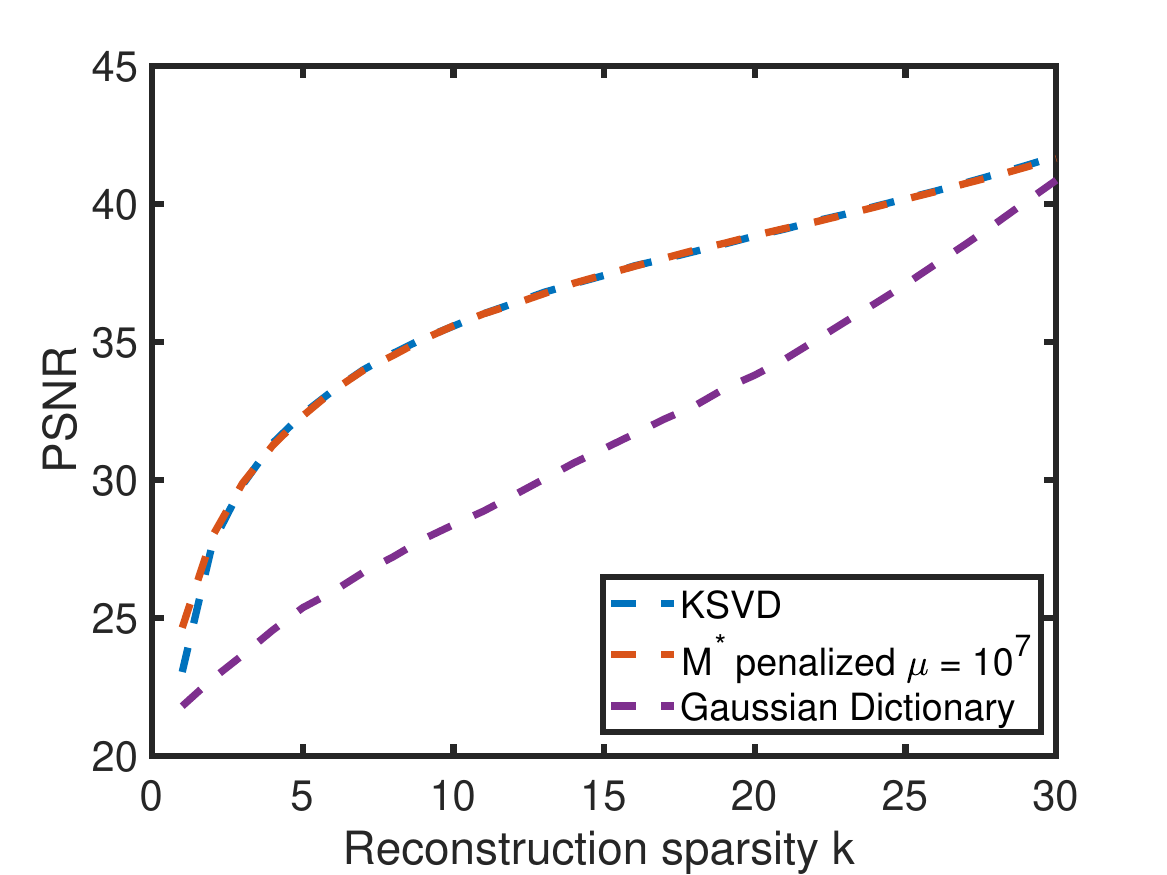}&
        \includegraphics[scale=0.27]{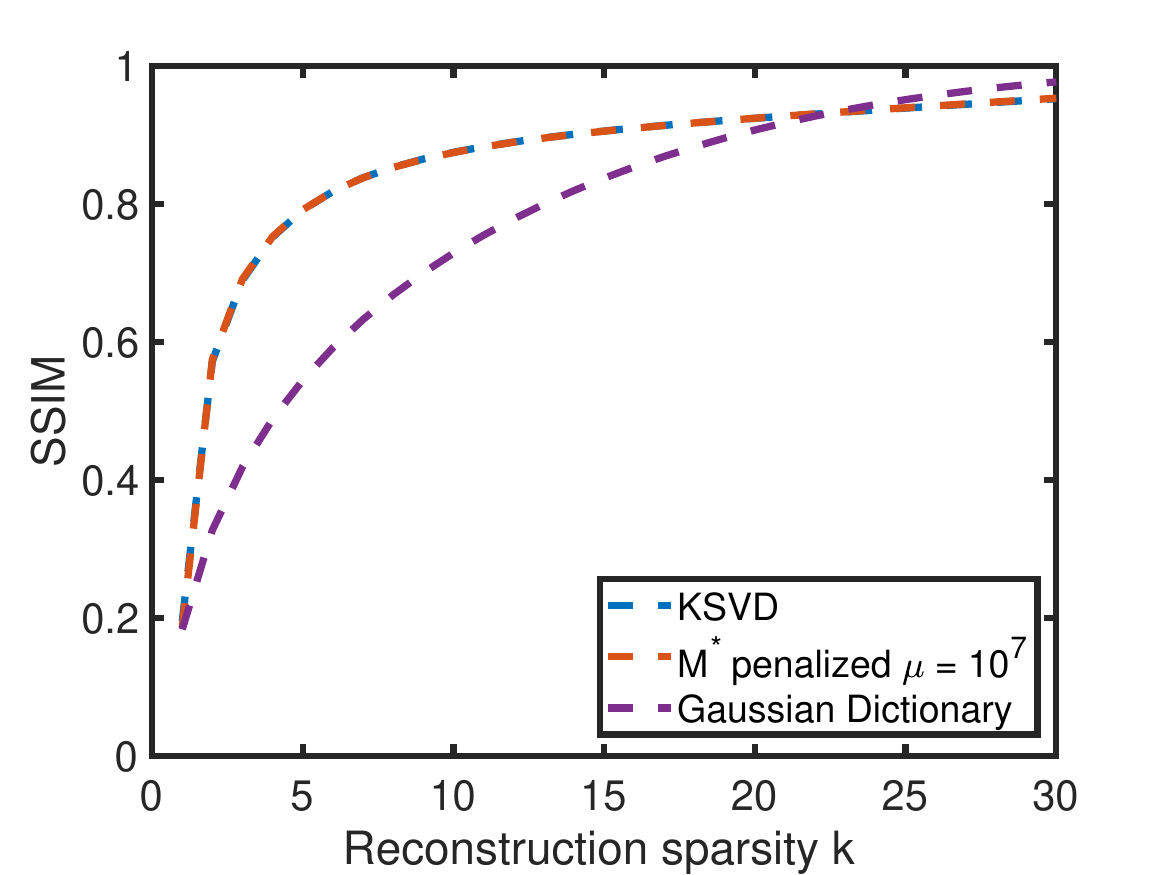}\\
        \includegraphics[scale=0.27]{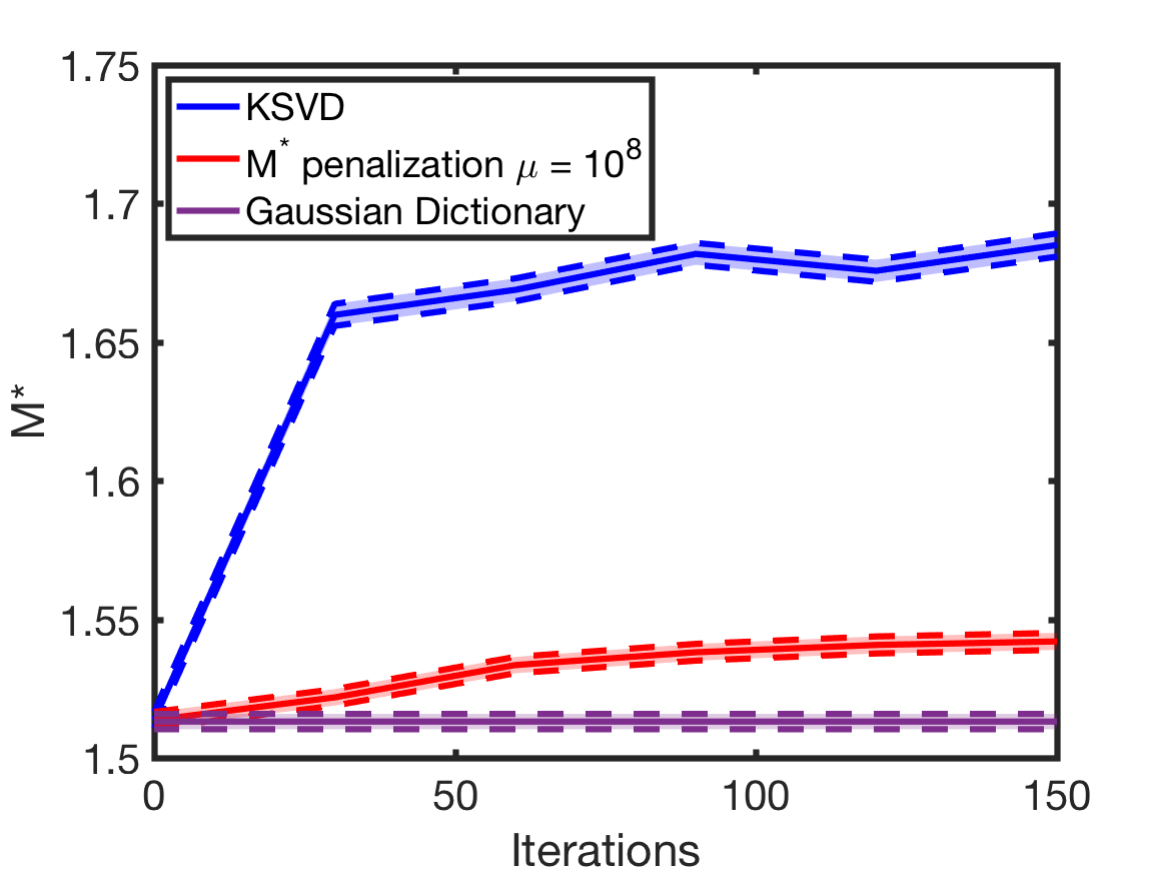}&\includegraphics[scale=0.27]{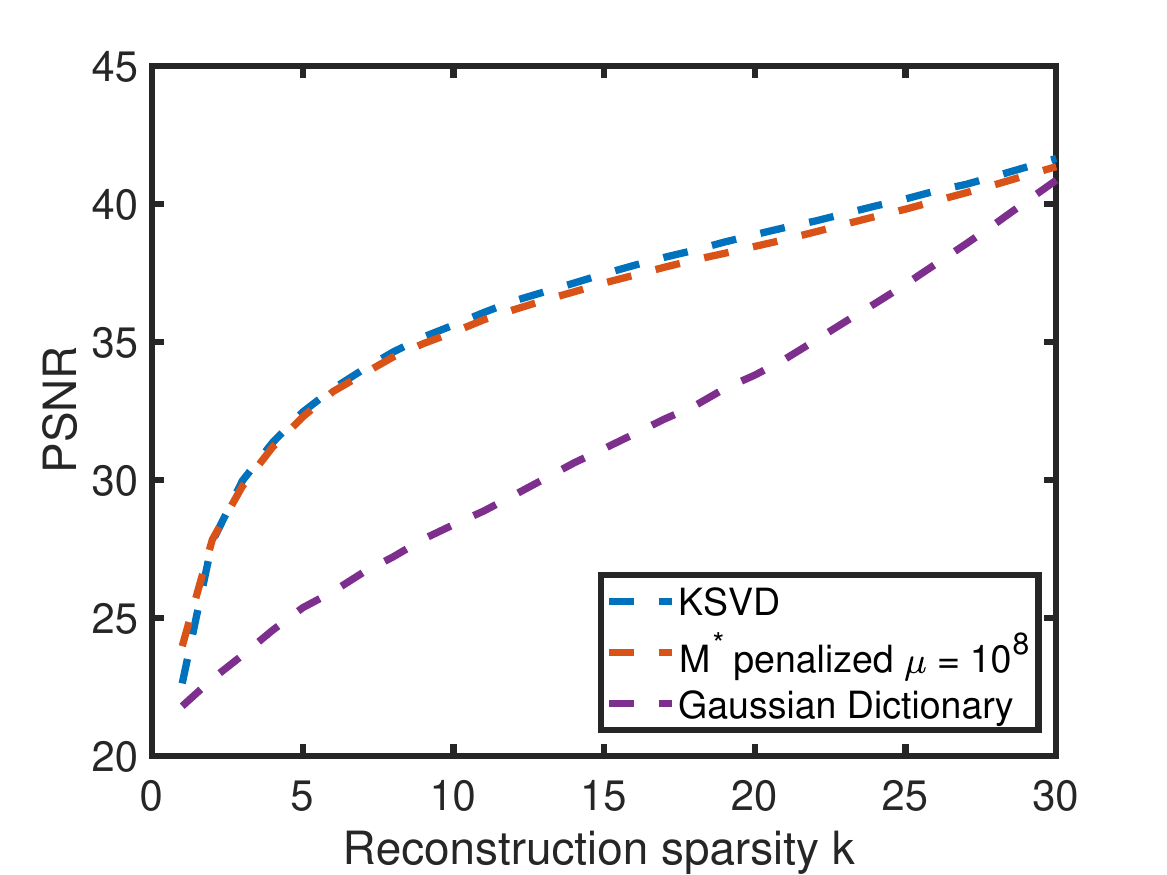}
        &\includegraphics[scale=0.27]{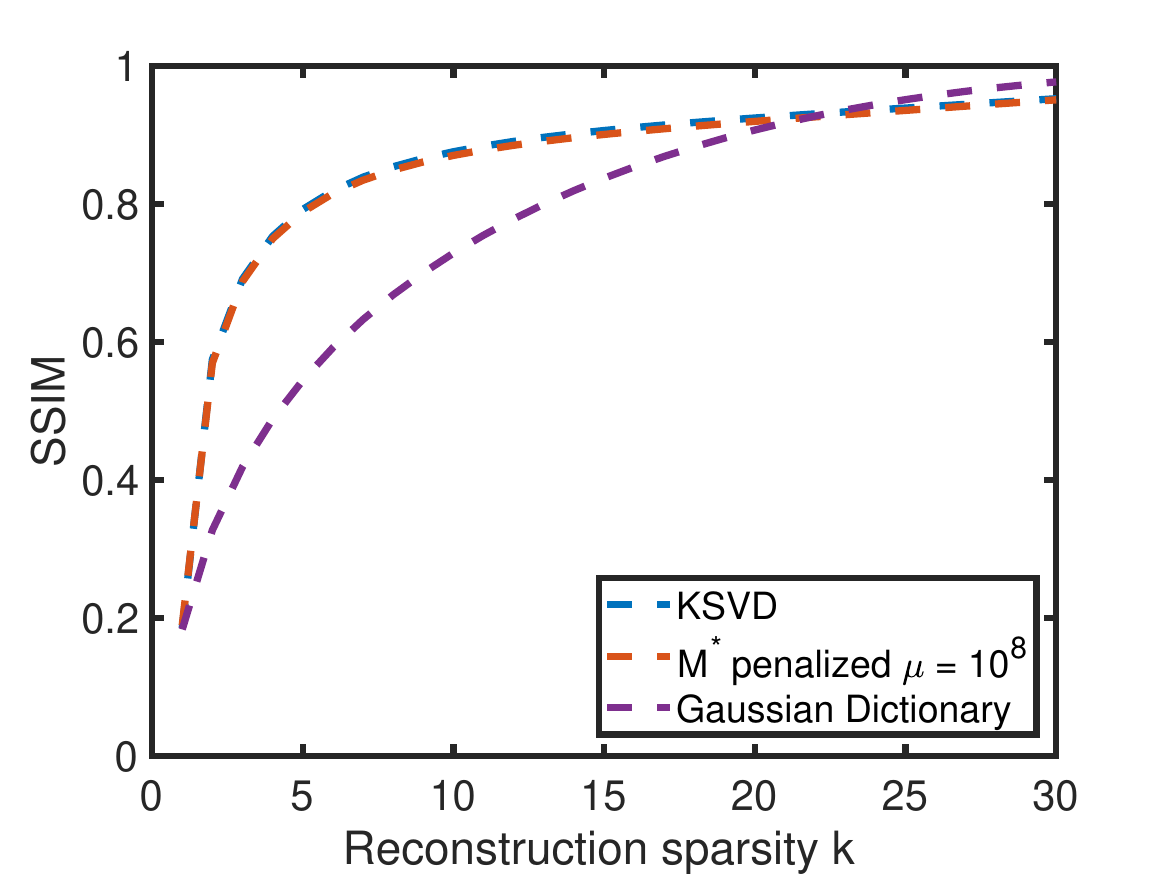}
    \end{tabular}
    \caption{Compression experiments with training sparsity $S = 5$. Top: $\mu=10^7$. Bottom: $\mu=10^8$. Left: Evolution of the $M^*$ across algorithm iterations. Middle: PSNR of the compressed test images. Right: SSIM of the compressed test images. }
    \label{fig:dicos-comp-res}
\end{figure*}

We write $D_S$ the dictionary obtained by KSVD with a training sparsity $S$, and $D_S^{\mu}$ the one obtained by the $M^*$ penalized algorithm with sparsity $S$ and regularization parameter $\mu$ in problem~\eqref{eq:dico-m*}. For a patch representation $Y$ of a test image, a reconstruction sparsity $k$ and a penalization coefficient $\mu$, approximation quality for the KSVD (resp. $M^*$ penalized) algorithm is obtained by computing both PSNR and SSIM between the ground truth $Y$, and $Y_k(D_S)$ (resp. $Y_k(D_S^{\mu})$). SSIM is a measurement of structural similarity designed to describe the perceived quality of an image more faithfully than PSNR, which is a pixel to pixel measurement \citep{Wang04a}. In order to plot the aggregate curve in Figure~\ref{fig:dicos-comp-res} on the right, we took the average of the PSNR and SSIM values over all 21 images in the test set, for a range of reconstruction sparsity $k$ between $2$ and $30$.

When using small penalization $\mu$ in problem~\eqref{eq:dico-m*}, the two methods had similar compression performances on the test set, with a minor advantage for $M^*$ penalized dictionary for small values of $k$. In this case, the $M^*$ of the penalized dictionary has an intermediate value between that of the dictionary from KSVD and that of a Gaussian dictionary. Increasing the penalization parameter allows to learn dictionaries with $M^*$ almost as low as Gaussian ones, however these new dictionaries with low $M^*$ don't fit the training data as well and the test PSNR and SSIM become worse than those of KSVD.

Convergence of the algorithms with deterministic initialization has also been experimented, with $D_0$ a constant matrix with columns of norm 1. The KSVD algorithm converges very slowly in this case (if at all). All the columns of the dictionary remain very close to the initial ones. The penalized algorithm on the other hand achieves similar performances on train and test errors compared to the random initialization setting. It also appears that a stronger $M^*$ regularization is suitable to reach better training error, compared with random initialization (see Figure~\ref{fig:deter-init}).

For the compression task, it thus appears that dictionaries learned by KSVD are nearly optimal, if randomly initialized, in the sense that learning a dictionary with lower $M^*$ through $M^*$ penalization method does not improve performance. Regularization does improve convergence when starting from a deterministic matrix.

\end{document}